\documentclass[12pt]{amsart}
\usepackage{amsmath,amssymb}
\textwidth 158mm
\oddsidemargin 2mm
\evensidemargin 2mm
\textheight 209mm
\title[Mankiewicz's Theorem and the Mazur--Ulam property]{Mankiewicz's 
theorem and the Mazur--Ulam property for $\mathrm{C}^*$-alge\-bras}
\author{Michiya Mori}
\address{Graduate School of Mathematical Sciences, The University of Tokyo, \mbox{153-8914} Japan}
\email{mmori@ms.u-tokyo.ac.jp}
\author{Narutaka Ozawa}
\address{RIMS, Kyoto University, \mbox{606-8502} Japan}
\email{narutaka@kurims.kyoto-u.ac.jp}
\subjclass{46B20; 46B04, 46L05}

\keywords{Mankiewicz's theorem, Mazur--Ulam property, Tingley's problem, $\mathrm{C}^*$-alge\-bras}
\date{\today}

\newtheorem{thm}{Theorem}
\newtheorem{prop}[thm]{Proposition}
\newtheorem{cor}[thm]{Corollary}
\newtheorem{lem}[thm]{Lemma}
\theoremstyle{definition}

\newtheorem{example}[thm]{Example}

\newcommand{\IB}{\mathbb B}
\newcommand{\IC}{\mathbb C}

\newcommand{\IF}{\mathbb F}
\newcommand{\IH}{\mathbb H}

\newcommand{\IM}{\mathbb M}
\newcommand{\IN}{\mathbb N}
\newcommand{\IR}{\mathbb R}

\newcommand{\cD}{\mathcal D}
\newcommand{\cH}{\mathcal H}
\newcommand{\cJ}{\mathcal J}
\newcommand{\cU}{\mathcal U}
\newcommand{\cN}{\mathcal N}

\newcommand{\vp}{\varphi}
\newcommand{\ve}{\varepsilon}

\DeclareMathOperator{\tr}{tr}
\DeclareMathOperator{\id}{id}
\DeclareMathOperator{\ext}{ext}

\DeclareMathOperator{\dist}{dist}
\DeclareMathOperator{\conv}{conv}

\DeclareMathOperator{\supp}{supp}

\newcommand{\ip}[1]{\mathopen{\langle}#1\mathclose{\rangle}}
\begin{document}
\begin{abstract}
We prove that every unital $\mathrm{C}^*$-alge\-bra $A$ 
has the Mazur--Ulam property. Namely, 
every surjective isometry from the unit sphere $S_A$ of $A$ 
onto the unit sphere $S_Y$ of another normed space $Y$ extends 
to a real linear map. 
This extends the result of A.~M.~Peralta and F.~J.~Fern\'andez-Polo 
who have proved the same under the additional assumption that both $A$ and $Y$ 
are von Neumann algebras. In the course of the proof, 
we strengthen Mankiewicz's theorem and prove that 
every surjective isometry from a closed unit ball with enough extreme 
points onto an arbitrary convex subset of a normed space is necessarily affine. 
\end{abstract}
\maketitle
\section{Introduction}
The celebrated Mazur--Ulam theorem (\cite{mu}) asserts that 
every surjective isometry between normed spaces $X$ and $Y$ is necessarily affine. 
This was extended by P.~Mankiewicz (\cite{mankiewicz}) to surjective isometries 
between the closed unit balls $B_X$ and $B_Y$. 
Motivated by these results, D.~Tingley (\cite{tingley}) posed the following problem in 1987: 
Does every surjective isometry $T\colon S_X \to S_Y$ between 
the unit spheres of normed spaces $X$ and $Y$ 
extend to a real linear isometry between $X$ and $Y$? 
Currently, no counterexample to Tingley's problem is known. 
A Banach space $X$ is said to have the \emph{Mazur--Ulam property} (\cite{cd}) if 
Tingley's problem has an affirmative answer for an arbitrary target $Y$. 
The main result of the present paper is the following. 
\begin{thm}\label{thm:mau}
Every unital complex $\mathrm{C}^*$-alge\-bra (as a real Banach space) 
and every real von Neumann algebra has the Mazur--Ulam property.
\end{thm}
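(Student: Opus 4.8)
The plan is to show that the positively homogeneous extension naturally associated with $T$ is real-linear, with the strengthened form of Mankiewicz's theorem serving as the engine of the argument. First I would record the standard fact, essentially due to Tingley, that any surjective isometry $T\colon S_A\to S_Y$ is odd, i.e.\ $T(-x)=-T(x)$; this allows one to define $\tilde T\colon A\to Y$ by $\tilde T(0)=0$ and $\tilde T(rx)=rT(x)$ for $x\in S_A$ and $r\ge 0$, a well-defined positively homogeneous map with $\|\tilde T(x)\|=\|x\|$. Real-linearity of $\tilde T$ is equivalent to its additivity, and additivity will follow once $T$ is known to be affine on sufficiently many convex subsets of $S_A$. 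The hypothesis of the strengthened Mankiewicz theorem requires a domain ball \emph{with enough extreme points}, and this is where the unital structure enters: by the Russo--Dye theorem $B_A=\overline{\conv}\,\cU(A)\subseteq\overline{\conv}(\ext B_A)$, and the real analogue applies to real von Neumann algebras, so both classes of spaces---and, crucially, the corner algebras appearing below---have enough extreme points.

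The geometric heart of the matter is the face structure of $B_A$. I would invoke the description of the norm-closed proper faces of the unit ball of a $\mathrm{C}^*$-algebra in terms of partial isometries: each such maximal face $F$ is contained in $S_A$ and is affinely isometric to the closed unit ball of a corner $A_0$ of (a bidual of) $A$, which is again a $\mathrm{C}^*$-algebra-type object with enough extreme points. Thus $S_A$ is richly populated by convex subsets $F$ that are themselves unit balls with enough extreme points---exactly the objects to which the strengthened Mankiewicz theorem applies.

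The main obstacle is to show that $T$ carries these faces to convex subsets of $S_Y$, so that the strengthened Mankiewicz theorem can be invoked; this is delicate precisely because $Y$ is an arbitrary normed space, carrying no a priori order or algebraic structure. The key is to characterize the relevant faces purely metrically, in terms of the distance function on the sphere (for instance through the behaviour of antipodal distances $\|x-y\|=2$ and of midpoints), so that the isometry $T$ automatically preserves them and sends $F$ onto a convex subset $T(F)$ of $S_Y$. Granting this, $T|_F$ is a surjective isometry from a unit ball with enough extreme points onto the convex set $T(F)$, hence affine by the strengthened Mankiewicz theorem. I expect verifying the metric recognizability of the faces, together with the convexity of their images, to be the most technical and the decisive step.

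Finally I would patch the local affinity into global real-linearity. Using once more that the unitaries generate $B_A$ (Russo--Dye) and that any two points of $S_A$ can be linked through a chain of overlapping maximal faces, the affinity of $T$ on each face propagates to additivity of $\tilde T$: for $x,y\in S_A$ one locates them, or the relevant convex combinations, inside faces on which $T$ is already affine, and concludes $\tilde T(x+y)=\tilde T(x)+\tilde T(y)$. The von Neumann algebra case proceeds identically, with weak$^*$-density of the convex hull of unitaries supplying the needed abundance of faces. Real-linearity of $\tilde T$ then yields the Mazur--Ulam property for $A$.
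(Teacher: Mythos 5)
Your outline reproduces the paper's high-level architecture (strong Mankiewicz property as the engine, facial structure, Russo--Dye, then globalization), but the two steps you yourself flag as decisive are exactly where the sketch fails, and in both cases the difficulty is different from the one you anticipate. First, for genuine norm-closed faces no metric characterization via antipodes or midpoints is needed: by Tanaka's results every closed face of $S_A$ is an intersection of maximal faces, and a surjective isometry carries maximal faces to maximal faces, so $T(F)$ is automatically a convex face (Lemma~\ref{lem:intersection}). The real obstruction is that, for a general unital $\mathrm{C}^*$-alge\-bra, a closed face of $S_A$ is \emph{not} affinely isometric to the unit ball of a corner of $A$: faces correspond to compact projections $p\in A^{**}$, and when $p\notin A$ the face $F(p)=\{x\in S_A : xp=p=px\}$ is only the $A$-part of a ball in $A^{**}$, to which Theorem~\ref{thm:man} does not apply (and $T$ is not defined on $S_{A^{**}}$). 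The paper circumvents this with Proposition~\ref{prop:tech}: each maximal face is approximated in the point-norm topology by convex sets $K_n=s+(1-s)B_{C^\psi_{\IR}}$ modelled on real $\mathrm{C}^*$-alge\-bras $C^\psi_{\IR}=\psi^{-1}(\IR)$. These $K_n$ are \emph{not} faces (they are unions of level sets $K(\gamma)$), so Tanaka's theorem gives nothing about $T(K_n)$; its convexity requires the new isometry-invariant sets $E_\lambda$ defined by $\dist(x,E)\le 1-\lambda$ and $\dist(x,-E)\le 1+\lambda$ (Lemmas~\ref{lem:tflambda}--\ref{lem:tfl}), the identification $F(p)_\lambda=F(p,\lambda)$ (Lemma~\ref{lem:fvlambda}), and an approximation argument with the sets $H^i_m(\gamma)$, together with an ad hoc Russo--Dye-type result for $C^\psi_{\IR}$ (Lemma~\ref{lem:adhoc}; note that general real $\mathrm{C}^*$-alge\-bras fail Russo--Dye, e.g.\ $C([0,1],\IR)$, whose extreme points have constant convex hull --- which is also why the theorem covers real von Neumann algebras but not real $\mathrm{C}^*$-alge\-bras). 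None of this machinery is present, or replaceable by ``metric recognizability of faces,'' in your sketch.

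The globalization step is also not viable as described. Affinity of $T$ on each face does not propagate to additivity of the homogeneous extension $\tilde T$ by ``chains of overlapping maximal faces'': typical pairs $x,y\in S_A$ (two generic unitaries, or $x$ and $-x$) lie in no common face, and intersections of distinct maximal faces are too small to transport affine identities around the sphere. The paper instead dualizes: by Lemma~\ref{lem:cond} it suffices to produce, for each pure state $\vp$, a functional $\psi\in B_{Y^*}$ with $\Re\vp=\psi\circ T$, since these functionals form a norming family; $\psi$ comes from Eidelheit separation applied to the intersection face $T(E_\vp)$, and the identity $\psi\circ T=\Re\vp$ is verified on unitaries using Kadison's transitivity theorem and the approximate-unit analysis of Lemma~\ref{lem:pure}, then extended over $S_A$ by Lemma~\ref{lem:rd}. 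Finally, that argument needs a minimal projection orthogonal to $p\vee u^*pu$, so it breaks down for the type $\mathrm{I}_2$ factors; the paper treats $A=\IM_2(\IF)$, $\IF\in\{\IR,\IC,\IH\}$, by an entirely separate section (the decomposition $A_{\mathrm{sa}}\cong\IR 1\oplus_1\cH$ and a trilateration argument). Your uniform treatment would miss these low-dimensional cases altogether.
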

Tingley's problem between von Neumann algebras has been solved earlier 
in \cite{pfp} (see also \cite{mori,tanaka2}).
See \cite{peralta} and the references herein 
for more information about Tingley's problem for Banach spaces related 
to operator algebras. 
The Mazur--Ulam property for commutative $\mathrm{C}^*$-alge\-bras 
has been proved in \cite{ding,fw,liu,pca}.
For more examples of Banach spaces with 
the Mazur--Ulam property, see \cite{thl} for example. 
The starting point of the present and many other works on Tingley's problem 
for operator algebras is R.~Tanaka's observation (\cite{tanaka2}) that 
a surjective isometry from the unit sphere $S_A$ of 
a $\mathrm{C}^*$-alge\-bra $A$ onto another unit sphere $S_Y$ 
maps closed faces onto closed faces. 

The unit sphere of a $\mathrm{C}^*$-alge\-bra has many faces 
that are approximable by closed unit balls of $\mathrm{C}^*$-alge\^bras.
To exploit this, 
we revisit Mankiewicz's theorem. 
Let $K$ be a convex subset in a normed space $X$. 
The subset $K$ is called a \emph{convex body} if it has non-empty interior in $X$. 
Mankiewicz (\cite{mankiewicz}) has proved that any isometry between 
convex bodies is necessarily affine. 
We say $K$ has the \emph{strong Mankiewicz property} if every surjective 
isometry $T$ from $K$ onto an arbitrary convex subset $L$ in a normed 
space $Y$ is affine. 
Every convex subset of a strictly convex normed space has this property 
because it is uniquely geodesic (cf.\ Lemma 6.1 in \cite{bfgm}), but 
some convex subset of $L^1[0,1]$ does not (\cite{schechtman}), see Example~\ref{example}. 
Every normed space also has this property  
by Figiel's theorem (\cite{figiel}). 
This probably suggests that the same is true for every convex body, 
but this is not clear to the authors 
(since the range $L$ is not assumed to have non-empty interior).
\begin{thm}\label{thm:man}
Let $X$ be a Banach space such that 
the closed convex hull of the extreme points $\ext B_X$ 
of the closed unit ball $B_X$ has non-empty interior in $X$. 
Then, every convex body $K\subset X$ has the strong Mankiewicz property.
\end{thm}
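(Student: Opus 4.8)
The plan is to prove that $T$ preserves midpoints, i.e.\ $T(\tfrac{a+b}{2})=\tfrac12(Ta+Tb)$ for all $a,b\in K$; since an isometry is continuous, this propagates to $T((1-t)a+tb)=(1-t)Ta+tTb$ for all dyadic, hence all, $t\in[0,1]$, which is affineness. After a translation in $X$ and in $Y$ I may assume $0\in\operatorname{int}K$ and $T0=0$. Since $C:=\overline{\conv}(\ext B_X)$ is symmetric (the extreme points of a ball come in pairs $\pm e$) with non-empty interior, we have $0\in\operatorname{int}C$, and as $0\in\operatorname{int}K$ a small dilate $rC$ lies in $K$; thus through every interior point of $K$ run short segments in every extreme direction $e\in\ext B_X$. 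The two features I intend to exploit are the rigidity of midpoints along extreme directions and the convexity of the target $L$.

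The first key step is a metric-midpoint lemma: if $e\in\ext B_X$ and $a,b\in X$ with $b-a=2\lambda e$ for some $\lambda>0$, then $\tfrac{a+b}{2}$ is the \emph{unique} $z\in X$ with $\|z-a\|=\|z-b\|=\tfrac12\|a-b\|$. Indeed, normalising so that $a=\lambda e$ and $b=-\lambda e$, any such $z$ gives unit vectors $p:=(z+\lambda e)/\lambda$ and $q:=(\lambda e-z)/\lambda$ with $\tfrac12(p+q)=e$, and extremality of $e$ forces $p=q=e$, whence $z=0$. The second step turns this into midpoint preservation along extreme directions, and here the convexity of $L$ enters decisively. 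For $a,b\in K$ with $b-a$ parallel to some $e\in\ext B_X$, the lemma shows that the metric midpoints of $a,b$ lying in $K$ form the singleton $\{\tfrac{a+b}{2}\}$ (the unique metric midpoint in $X$ already lies in $K$ by convexity). Since $T\colon K\to L$ is a surjective isometry, $w$ is a metric midpoint of $Ta,Tb$ in $L$ if and only if $T^{-1}w$ is a metric midpoint of $a,b$ in $K$; hence the metric midpoints of $Ta,Tb$ in $L$ also form a singleton, namely $\{T(\tfrac{a+b}{2})\}$. But $\tfrac12(Ta+Tb)$ lies in $L$ by convexity and is manifestly a metric midpoint of $Ta,Tb$, so $T(\tfrac{a+b}{2})=\tfrac12(Ta+Tb)$. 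Iterating along a fixed extreme direction, $T$ is affine on every segment of $K$ parallel to an extreme point of $B_X$; in particular $T(te)=t\,\phi(e)$ for a well-defined $\phi(e)$ and small $t$.

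The remaining and main obstacle is to propagate affineness from extreme directions to \emph{all} directions, and this is where the hypothesis that $C$ has non-empty interior must do its real work, since an arbitrary two-dimensional slice of $X$ may contain no extreme direction at all, so the lemma applies to no pair in it. I plan to proceed as follows. A parallelogram computation using affinity along the two families of extreme sides shows, for extreme $e,f$, that $T(\tfrac{e+f}{2})=\tfrac14\bigl(\phi(e)+\phi(f)+T(e+f)\bigr)$; the difficulty is precisely that the non-extreme diagonal direction $e+f$ is not yet controlled, so a single parallelogram leaves a second-order defect. To kill it I intend to use the abundance of extreme points: since $rC\subset K$ has interior, every short increment $w=b-a$ is a barycenter $w=\int_{\ext B_X}e\,d\mu(e)$ of a probability measure on $\ext B_X$ (in general a weak${}^*$-limit of finitely supported ones), and I will show that $T(a+w)-T(a)=\int\phi(e)\,d\mu(e)$ by integrating the extreme-direction affinity and using continuity of $T$. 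An equivalent formulation I would try to make rigorous is that $T$ sends straight segments to straight segments: the preimage $T^{-1}([Ta,Tb])$ is always a metric geodesic from $a$ to $b$, which the lemma pins to the straight segment as soon as the displacement is assembled from extreme directions.

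I expect the base-point independence of the directional increment $\phi$, i.e.\ that the second-order defect above genuinely vanishes, to be the technical heart. I anticipate handling it by first reducing to a finite-dimensional subspace spanned by sufficiently many extreme points—legitimate because $C$ has interior—and there invoking either a direct barycentric argument or Figiel's theorem on the resulting full-dimensional ball, and then passing to the limit. Once midpoint preservation holds for all $a,b\in K$, affineness, and hence the strong Mankiewicz property, follow immediately.
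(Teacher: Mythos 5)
Your first half is sound and coincides with the paper's opening move: the uniqueness of metric midpoints between points whose difference is a multiple of an extreme point, combined with surjectivity of $T$ onto the \emph{convex} set $L$ (so that $\tfrac12(Ta+Tb)\in L$ is a metric midpoint and is therefore forced to equal $T(\tfrac{a+b}{2})$), is exactly how the paper gets that $T$ is affine on $[-a,a]$ and on $[x,x+(1-\|x\|)a]$ for $a\in\ext B_X$ (after first reducing from a general convex body $K$ to $B_X$ by a local-affineness-plus-continuation lemma). The genuine gap is the propagation step, which you yourself flag as the technical heart but leave unproved, and neither of your proposed rescues works. The increment $T(x+\lambda e)-T(x)$ depends on the base point $x$, and your ``integrating the extreme-direction affinity'' is exactly the chaining whose errors do not vanish at unit scale: quantitatively, the paper proves $\|T(x+\lambda a)-(T(x)+\lambda T(a))\|\le 4\|x\|\,|\lambda|$, so synthesizing a displacement $\sum_k\lambda_k a_k$ by successive extreme-direction steps accrues an error of order $(\sum_k|\lambda_k|)^2$, which is not small. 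The paper kills this defect by rescaling: for $T_n(x):=nT(\tfrac1n x)$ the same chaining gives error at most $4C^2/n$ with $C=\sum_k|\lambda_k|$, and passing to $\hat T$ on $\ell_\infty(\IN;\cdot)/c_0(\IN;\cdot)$ yields an exactly linear map. Crucially, even this does not prove that $T$ itself is affine --- linearity of the blow-up limit says nothing about $T$ at unit scale. The paper's second essential idea, entirely absent from your proposal, is to change the goal: from the sandwich $\delta B_{\hat T(X_\infty)}=\hat T(\delta B_{X_\infty})\subset L_\infty\subset\hat T(X_\infty)$ it deduces that $L$ has non-empty interior in its closed linear span, and only then concludes by Mankiewicz's classical theorem for convex bodies. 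Without some such mechanism your midpoint identity $T(\tfrac{a+b}{2})=\tfrac12(Ta+Tb)$ is never established for pairs $a,b$ whose difference is not an extreme direction.

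Your fallback options fail for concrete reasons. Restricting $T$ to a finite-dimensional slice of $K$ is not legitimate: the restricted map is no longer surjective onto a convex set, so both the surjectivity used in your midpoint lemma (metric midpoints of $Ta,Tb$ in $L$ correspond to metric midpoints of $a,b$ in $K$) and the convexity of the target are lost --- the image of a slice sits in $Y$ in an uncontrolled way, which is the very difficulty the theorem is about. Figiel's theorem concerns isometric embeddings defined on an \emph{entire} normed space and produces a linear left inverse (a quotient map), not affineness of the given map; it has no version for maps defined only on a ball, which is precisely why the paper must take the detour through showing $L$ is a convex body in its span. A minor further point: in a general Banach space one cannot represent $w$ as a barycenter $\int e\,d\mu(e)$ over $\ext B_X$, since no compactness of $\ext B_X$ is available; the hypothesis only gives norm-approximation of $\delta B_X$ by finite convex combinations of extreme points, which is what the paper's estimate $(\ast)$ is designed to exploit.
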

\begin{cor}\label{cor:man}
Let $A$ be a unital complex $\mathrm{C}^*$-alge\-bra or 
a real von Neumann algebra. 
Then, every convex body in $A$ has the strong Mankiewicz property. 
\end{cor}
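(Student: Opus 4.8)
The plan is to deduce Corollary~\ref{cor:man} from Theorem~\ref{thm:man} by checking, in each case, that the closed convex hull $\overline{\conv}(\ext B_A)$ of the extreme points of the unit ball has non-empty interior in $A$. Since the unitary group $\cU(A)$ is contained in $\ext B_A$ (a unitary is a partial isometry whose left and right defect projections $1-uu^*$ and $1-u^*u$ both vanish, hence an extreme point of $B_A$, and this characterization is purely algebraic so it applies equally to real von Neumann algebras), it suffices to show that $\overline{\conv}\,\cU(A)$ has non-empty interior. For a unital complex $\mathrm{C}^*$-alge\-bra $A$ this is immediate from the Russo--Dye theorem, which gives $\overline{\conv}\,\cU(A)=B_A$; the hypothesis of Theorem~\ref{thm:man} then holds with room to spare.

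For a real von Neumann algebra $M$ the usual proof of Russo--Dye is unavailable, since the unitary $a\pm i\sqrt{1-a^2}$ attached to a self-adjoint contraction $a$ makes no sense without $i$. The plan is to build the required unitaries by hand. First I would show $B_{M_{\mathrm{sa}}}\subseteq\overline{\conv}\{\text{symmetries}\}$ using the spectral theorem for self-adjoint elements of $M$, whose spectral projections lie in $M$ precisely because $M$ is a von Neumann algebra: a positive contraction $h$ is recovered from its spectral projections by the layer-cake formula $h=\int_0^1 \chi_{[\lambda,1]}(h)\,d\lambda$, exhibiting $h$ as a norm-limit of convex combinations of projections, whence $2h-1$ is a corresponding limit of convex combinations of the symmetries $2p-1$. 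As symmetries are unitaries, this places the entire self-adjoint unit ball inside $\overline{\conv}\,\cU(M)$.

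Second, for an invertible contraction $a$ with $\|a\|<1$, the polar decomposition $a=u\lvert a\rvert$ has $u$ a genuine unitary of $M$ (because $\lvert a\rvert=(a^*a)^{1/2}$ is then invertible) and $\lvert a\rvert$ an invertible positive contraction. Combining with the first step gives $a=u\lvert a\rvert\in u\cdot\overline{\conv}\,\cU(M)=\overline{\conv}\,\cU(M)$. Since the invertible elements of norm $<1$ form an open set—for instance it contains a small ball around $\tfrac12\cdot 1$—this shows $\overline{\conv}\,\cU(M)$ has non-empty interior, and Theorem~\ref{thm:man} applies.

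The main obstacle is exactly this real case: lacking the complex functional-calculus trick, one must replace the one-line complex argument by the two ingredients above, namely the decomposition of positive contractions into symmetries via spectral projections and the extraction of a unitary from the polar decomposition of an invertible contraction. Restricting attention to invertible contractions is what lets the argument sidestep the delicate question of extending a partial isometry to a unitary, and it is also the point at which the von Neumann algebra hypothesis (rather than merely a unital real $\mathrm{C}^*$-alge\-bra) is genuinely used.
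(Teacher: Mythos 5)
Your proposal is correct, and in the real case it takes a genuinely different route from the paper. In the complex case both arguments are the same appeal to the Russo--Dye theorem. For a real von Neumann algebra $M$, the paper instead cites the real Russo--Dye theorem (Theorem 7.2.4 in \cite{li}), which places $B_M$ inside the closed convex hull of $\{\cos(h)e^k : h=h^*,\ k=-k^*\}$, and adds the observation that $\cos(h)$ lies in the closed convex hull of unitaries; this yields the strongest conclusion, $B_M=\overline{\conv}\,\cU(M)$, so Theorem~\ref{thm:man} applies with $\delta=1$. You prove much less --- only that $\overline{\conv}\,\cU(M)$ contains a ball around $\frac{1}{2}1$ --- via two elementary steps: self-adjoint contractions are norm limits of convex combinations of symmetries $2p-1$ through spectral projections and the layer-cake formula (this is exactly where the von Neumann hypothesis enters, as you note; it genuinely fails for, say, $C([0,1],\IR)$, whose only unitaries are $\pm1$), and invertible contractions then lie in the hull by polar decomposition, since $|a|$ invertible makes $u=a|a|^{-1}$ a unitary of $M$. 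Your conclusion is strictly weaker than Russo--Dye --- invertibles are not norm-dense in $B_M$ in general, e.g.\ the unilateral shift in $\IB(\cH)$ is not a norm limit of invertibles --- but non-empty interior is literally all that Theorem~\ref{thm:man} requires (and since $\ext B_M=-\ext B_M$, an interior point of the hull does give a ball $\delta B_M$ about the origin, which is the form used in the proof of Theorem~\ref{thm:man}). What your route buys is self-containedness, avoiding the citation of Li's theorem; indeed your first step is essentially the same symmetry decomposition that the paper's proof silently invokes when it asserts that $\cos(h)$ is in the closed convex hull of unitaries. Two small points to tighten: the claim that unitaries of a real algebra are extreme points of $B_M$ is cleanest not by calling the characterization ``purely algebraic'' but by complexifying (a unitary of $M$ is a unitary of $M_{\IC}$, hence extreme in $B_{M_{\IC}}$, hence extreme in the smaller convex set $B_M$); and in your second step the condition $\|a\|<1$ serves only to make the relevant set open --- invertibility alone already gives $a=u|a|\in\overline{\conv}\,\cU(M)$.
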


\subsection*{Acknowledgment}
We are grateful to Professor G.~Schechtman for allowing us to include his example 
from \cite{schechtman}. 
The first author is partially supported by Leading Graduate Course for 
Frontiers of Mathematical Sciences and Physics, MEXT, Japan. 
The second author is partially supported by JSPS KAKENHI 17K05277 and 15H05739.
\subsection*{Notations and Remarks on real $\mathrm{C}^*$-alge\-bras}
Throughout this paper, $X$ and $Y$ are real normed (Banach) spaces and 
$A$ is a $\mathrm{C}^*$-alge\-bra which may be real or complex unless otherwise stated.
The unit sphere and the closed unit ball of $X$ are denoted respectively 
by $S_X$ and $B_X$. 
For any projection $p$ in a unital $\mathrm{C}^*$-alge\-bra, 
we write $p^\perp:=1-p$.
By definition, a real $\mathrm{C}^*$-alge\-bra $A$ 
is the real part $\{ a \in A_{\IC} : \cJ(a)=a\}$ 
of a complex $\mathrm{C}^*$-alge\-bra $A_{\IC}$ with respect to  
a conjugate-linear $*$-auto\-mor\-phism $\cJ$ such that $\cJ^2=\id$. 
Many of the standard operations in complex $\mathrm{C}^*$-alge\-bras 
work equally well for real $\mathrm{C}^*$-alge\-bras. 
For example, the modulus $|a|$ of an element $a$ in $A$ 
is firstly considered in the complexification $A_{\IC}$ 
and by uniqueness one sees that $|a|$ belongs to the real part $A$.
For any complex continuous function $f$ and any normal 
element $a$ in $A_{\IC}$, one has $\cJ(f(a))=\bar{f}(\cJ(a^*))$. 
A projection $p$ in a real von Neumann algebra $A$ is minimal 
if and only if $pAp=\IR,\,\IC,\,\IH$, in which case $p$ has rank 
at most $2$ in the complexification $A_{\IC}$. 
See \cite{li} for more on real operator algebras.
\section{On the strong Mankiewicz property}
\begin{lem}
If $B_X$ has the strong Mankiewicz property, then every convex body $K\subset X$ has 
the strong Mankiewicz property.
\end{lem}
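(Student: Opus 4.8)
The plan is to deduce affineness of $T$ from the hypothesis on $B_X$ by localising to closed balls contained in $K$ and then patching. Let $T\colon K\to L$ be a surjective isometry onto a convex set $L$ in a normed space $Y$, and note that $T$ is automatically $1$-Lipschitz, hence continuous. First I would reduce the problem to the interior: since a convex body has $\operatorname{int}K$ dense in $K$ (every point of $K$ is the limit of points $tq+(1-t)p$ with $q\in\operatorname{int}K$, $t\downarrow 0$), it suffices to prove that $T$ is affine on the open convex set $\operatorname{int}K$; affineness then extends to all of $K$ by continuity.

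The crucial observation is that a surjective isometry carries balls onto convex sets. If $\bar B(x_0,r)\subseteq K$, then because $T$ is a surjective isometry one has the identity $T(\bar B(x_0,r))=L\cap \bar B(T(x_0),r)$: the inclusion $\subseteq$ is immediate from $\|T(u)-T(x_0)\|=\|u-x_0\|$, and conversely any $l\in L$ with $\|l-T(x_0)\|\le r$ is of the form $l=T(u)$ with $u\in K$ and $\|u-x_0\|=\|l-T(x_0)\|\le r$, so $u\in\bar B(x_0,r)$. The right-hand side is an intersection of two convex sets, hence convex. Rescaling by the affine bijection $u\mapsto x_0+ru$ of $B_X$ onto $\bar B(x_0,r)$, the map $S(u):=r^{-1}\bigl(T(x_0+ru)-T(x_0)\bigr)$ is a surjective isometry from $B_X$ onto the convex set $r^{-1}\bigl((L\cap\bar B(T(x_0),r))-T(x_0)\bigr)$. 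By the strong Mankiewicz property of $B_X$, $S$ is affine, and therefore so is $T$ on $\bar B(x_0,r)$.

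It remains to pass from local to global affineness on $\operatorname{int}K$. Given $x,y\in\operatorname{int}K$, the segment $[x,y]$ lies in $\operatorname{int}K$, and each of its points has a closed ball neighbourhood inside $\operatorname{int}K$ on which $T$ is affine by the previous step; hence $t\mapsto T((1-t)x+ty)$ is locally affine on $[0,1]$ and, $[0,1]$ being connected, globally affine. In particular $T$ preserves midpoints on $\operatorname{int}K$, and being continuous it is affine there. The main obstacle---and the point that makes the whole reduction work---is the convexity of $T(\bar B(x_0,r))$: it is precisely the surjectivity of $T$ together with the convexity of the target $L$ that forces the image of a metric ball to equal $L\cap\bar B(T(x_0),r)$, which is what allows the hypothesis on $B_X$ to be invoked.
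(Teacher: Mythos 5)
Your proof is correct and takes essentially the same route as the paper's: you localise to closed balls $\bar B(x_0,r)\subseteq K$, use the identity $T(\bar B(x_0,r))=L\cap\bar B(T(x_0),r)$ (the key point, which the paper leaves implicit) to invoke the strong Mankiewicz property of $B_X$ after rescaling, and then patch local affineness together. The paper's two-line proof does exactly this, citing Mankiewicz's continuation argument for the local-to-global step that you instead carry out explicitly via segments in $\operatorname{int}K$ and density of the interior.
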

\begin{proof}
Let a surjective isometry $T$ be given.
The assumption implies that every interior point $x$ in $K$ 
has a neighborhood on which $T$ is affine. 
By continuation (see Proof of Theorem~2 in \cite{mankiewicz}), 
one sees that $T$ is affine everywhere. 
\end{proof}

\begin{proof}[Proof of Theorem~\ref{thm:man}]
By the above lemma, it suffices to show 
every surjective isometry $T\colon B_X\to L\subset Y$ is affine. 
We may assume that $T(0)=0$, which implies that 
$\|T(x)\|=\|x\|$ for all $x\in B_X$.
Let $a\in\ext B_X$. Since the line segment $[-a,a]$ is the unique geodesic path 
between $-a$ and $a$, the map $T$ is affine (linear) on $[-a,a]$. 
We claim that if $x\in X$ and $\lambda\in\IR$ are such that 
$\|x\|\le\frac{1}{2}$ and $\|x\|+|\lambda|\le1$, then
\[
\|T(x+\lambda a) - (T(x)+\lambda T(a))\|\le 4\|x\| |\lambda|.
\] 
For this, we may assume that $\lambda\geq0$ as $T(-a)=-T(a)$.
Since $T$ is affine on $[x,x+(1-\|x\|) a]$ by the similar reason as above, 
one has 
\begin{align*}
T(x+\lambda a) - T(x) = \frac{\lambda}{1-\|x\|}(T(x+(1-\|x\|) a)-T(x))
 \approx_{4\|x\|\lambda } \lambda T(a).
\end{align*}
Let $T_n\colon n B_X\to nB_Y$ be the map defined by 
$T_n(x)=nT(\frac{1}{n}x)$. 
By the previous inequality, for any $a_k\in\ext B_X$ and $\lambda_k\in\IR$ 
such that $C:=\sum_k|\lambda_k|$, one has 
\[\tag{$\ast$}
\| T_n(\sum_k\lambda_k a_k) - \sum_k \lambda_k T(a_k) \| \le 4n(\sum_k\frac{|\lambda_k|}{n})^2
\le \frac{4C^2}{n}
\]
for all $n\geq 2C$. 
We consider the Banach space $Z_\infty:=\ell_\infty(\IN;Z)/c_0(\IN;Z)$ for $Z=X$ or $Y$ 
and define $\hat{T}\colon X_\infty\to Y_\infty$ by 
$\hat{T}([x_n]_n)=[T_n(x_n)]_n$. 
Here $[x_n]_n$ denotes the element in $X_\infty$ represented 
by $(x_n)_n \in \ell_\infty(\IN;X)$. 
Observe that $\hat{T}$ is a well-defined isometry 
which is moreover linear by $(\ast)$ and the assumption on $X$.
We claim that 
\[
\delta B_{\hat{T}(X_\infty)}=\hat{T}(\delta B_{X_\infty})\subset L_\infty:=\{ [y_n]_n \in Y_\infty: y_n\in L\}\subset \hat{T}(X_\infty).
\]
Here $\delta>0$ is such that $\delta B_X$ is contained in the closed convex hull 
of $\ext B_X$. 
Thus the first inclusion follows from $(\ast)$ and the fact that 
$\sum_k \lambda_k T(a_k) \in L$ for every $a_k\in \ext B_X$ and 
$\lambda_k\geq0$ with $\sum_k\lambda_k=1$.
The second follows from the fact that if $y:=[y_n]_n\in L_\infty$, 
then for $x_n := n T^{-1}(\frac{1}{n}y_n)$, 
one has $[y_n]_n=\hat{T}([x_n]_n) \in\hat{T}(X_\infty)$. 
This claim implies that $L$ has non-empty interior in its linear span. 
Indeed, if $y\in L$ and $\lambda\in\IR$ are such that $\|\lambda y\|\le \delta$, 
then the constant sequence $y$ belongs to $L_\infty$ and so is 
$\lambda y$, which means that there is a sequence $(z_n)_n$ in $L$ 
such that $\| \lambda y - z_n\|\to0$ and so $\lambda y\in L$. 
Therefore, we can apply Mankiewicz's theorem (\cite{mankiewicz}) to $T$ and conclude 
that $T$ is affine. 
\end{proof}

\begin{proof}[Proof of Corollary~\ref{cor:man}]
It follows from the Russo--Dye theorem (Theorem I.8.4 in \cite{davidson} 
for the complex case and Theorem 7.2.4 in \cite{li} for the real case) 
that the closed unit ball of a unital complex $\mathrm{C}^*$-alge\-bra, 
as well as a real von Neumann algebra, coincides with the closed convex hull 
of its extreme points (unitary elements). 
Note that the real Russo--Dye theorem in \cite{li} states that $B_A$ is 
contained in the closed convex hull of $\{ \cos(h)e^k : h=h^*,k=-k^*\}$ 
and it is easily seen that $\cos(h)$ is contained in the closed convex hull of 
unitary elements in the real von Neumann algebra generated by $h$.
\end{proof}
\pagebreak[2]

The following beautiful example without the strong Mankiewicz property is provided 
for us by G. Schechtman (\cite{schechtman}).
\begin{example}\label{example}
Consider the set $K_0$ of all continuous strictly increasing functions $f$ from $[0,1]$ onto $[0,1]$ 
and put $K:=\overline{K_0}\subset L^1[0,1]$. 
Then, $K$ is compact and in particular it has empty interior. 
Let $T_0\colon K_0\to K_0$ be the map that sends $f$ to its inverse. 
Since the $L^1$ distance between two functions is the area enclosed 
by their graphs, $T$ is an isometry by the Fubini theorem.
The continuous extension $T\colon K\to K$ of $T_0$ is a surjective isometry 
which is not affine. 
Hence, the compact convex subset $K\subset L^1[0,1]$ does not have 
the strong Mankiewicz property.
\end{example}
\section{Proof of Theorem~\ref{thm:mau} for $\IB(\cH)$}
Since the proof of Theorem~\ref{thm:mau} for 
the type I factor $\IB(\cH)$, $\dim\cH>2$, is much simpler than the general $\mathrm{C}^*$-case, 
we give it here as an appetizer. 

\begin{lem}[cf.\ Lemma 2.1 in \cite{fw}]\label{lem:cond}
Let $T\colon S_X\to S_Y$ be a surjective isometry. 
Assume that there are $\{\vp_i\}_i\subset B_{X^*}$ 
and $\{\psi_i\}_i\subset B_{Y^*}$ such that 
$\vp_i=\psi_i\circ T$ and that the family $\{ \vp_i\}_i$ is norming for $X$. 
Then, $T$ extends to a linear isometry.
\end{lem}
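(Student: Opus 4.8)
The plan is to produce the extension as the positively homogeneous extension of $T$ and to use the norming family to realize both $X$ and $Y$ as isometric subspaces of a common $\ell_\infty(I)$, where the hypothesis $\vp_i=\psi_i\circ T$ becomes an intertwining identity that trivializes the problem.

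Concretely, first I would set $\tilde T(0)=0$ and $\tilde T(x)=\|x\|\,T(x/\|x\|)$ for $x\neq 0$. Since $T$ is a surjective isometry between the spheres, rescaling the corresponding statements for $T$ shows that $\tilde T\colon X\to Y$ is a norm-preserving bijection (so $\|\tilde T(x)\|=\|x\|$ for all $x$), and $\tilde T$ restricts to $T$ on $S_X$ by construction; note that no prior knowledge that $T$ is odd is needed here. It therefore suffices to prove that $\tilde T$ is linear.

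Next I would introduce $\Phi\colon X\to\ell_\infty(I)$, $\Phi(x)=(\vp_i(x))_i$, and $\Psi\colon Y\to\ell_\infty(I)$, $\Psi(y)=(\psi_i(y))_i$. Both maps are linear; $\Phi$ is isometric because $\{\vp_i\}_i$ is norming for $X$, whereas $\Psi$ is a priori only contractive since each $\psi_i\in B_{Y^*}$. The crucial point is the identity $\Psi\circ\tilde T=\Phi$, which for $x\neq 0$ reads $\psi_i(\tilde T(x))=\|x\|\,\psi_i(T(x/\|x\|))=\|x\|\,\vp_i(x/\|x\|)=\vp_i(x)$ and is trivial for $x=0$.

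Everything then follows from this single identity. For $y=\tilde T(x)$ one has $\|\Psi(y)\|=\|\Phi(x)\|=\|x\|=\|y\|$, so the contraction $\Psi$ is in fact isometric, hence injective, on all of $Y$; and since $\tilde T$ is onto, $\Psi(Y)=\Psi(\tilde T(X))=\Phi(X)$. Thus $\Psi$ maps $Y$ bijectively and isometrically onto the subspace $\Phi(X)\subset\ell_\infty(I)$, and the formula $\tilde T=(\Psi|_Y)^{-1}\circ\Phi$ exhibits $\tilde T$ as a composition of linear isometries. Hence $\tilde T$ is a linear isometry extending $T$, as required (and oddness of $T$ now follows a posteriori). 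The one step that carries the weight of the argument is the observation that the merely contractive map $\Psi$ is forced to be isometric on the range of $\tilde T$: this is precisely what makes $(\Psi|_Y)^{-1}$ available and converts the homogeneous, not obviously additive, map $\tilde T$ into a linear one; the remainder is bookkeeping.
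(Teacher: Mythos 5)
Your proposal is correct and follows essentially the same route as the paper: both embed $X$ and $Y$ into $\ell_\infty(I)$ via the families $\{\vp_i\}_i$ and $\{\psi_i\}_i$ and recover the extension as $(\Psi|_Y)^{-1}\circ\Phi$ (the paper's $V^{-1}\circ U$). Your explicit derivation that the contractive map $\Psi$ is forced to be isometric on $Y$ (via the identity $\Psi\circ\tilde T=\Phi$ and surjectivity of $\tilde T$) is precisely the justification behind the paper's terse remark that $\{\psi_i\}_i$ ``is also norming,'' so you have merely filled in a detail the paper leaves implicit.
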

\begin{proof}
By assumption, the linear map $U\colon X\to\ell_\infty$, defined by $(U x)(i)=\vp_i(x)$, 
is isometric. 
Since $\{\psi_i\}_i$ is also norming, the same holds true for $V\colon Y\to\ell_\infty$, 
which satisfies $U|_{S_X}=V\circ T$. 
Thus $T$ extends to a linear isometry (which is $V^{-1}\circ U$). 
\end{proof}

We note that any real linear functional $\vp$ on a complex Banach 
space $X_{\IC}$ has the complexification $\vp_{\IC}$ on $X_{\IC}$, 
which satisfies that  $\vp=\Re\vp_{\IC}$ and $\|\vp\|=\|\vp_{\IC}\|$ 
(see Lemma III.6.3 in \cite{conway}). 

\begin{lem}\label{lem:arens}
Let $\vp$ be a norm-one real or complex linear functional on 
a $\mathrm{C}^*$-alge\-bra $A$. 
If $a\in B_A$ is such that $\vp(a)=1$, then 
$\vp(x)=\vp(aa^*xa^*a)$ for all $x\in A$.
\end{lem}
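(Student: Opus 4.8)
The plan is to reduce everything to a normal functional on a von Neumann algebra and to exploit its polar decomposition. First I would dispose of the side cases. If $A$ is non-unital, pass to the bidual $A^{**}$, on which $\vp$ extends to a normal functional of the same norm with $\vp(a)=1$; since $aa^*xa^*a\in A$ whenever $x\in A$, it suffices to prove the identity on all of $A^{**}$. If $\vp$ is merely real linear, I would complexify: the remark preceding the lemma gives a complex linear $\vp_{\IC}$ with $\vp=\Re\vp_{\IC}$ and $\|\vp_{\IC}\|=\|\vp\|=1$, and from $\Re\vp_{\IC}(a)=1$ together with $|\vp_{\IC}(a)|\le1$ one gets $\vp_{\IC}(a)=1$; the identity for $\vp_{\IC}$ then yields the one for $\vp$ by taking real parts. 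Thus I may assume $A=M$ is a von Neumann algebra, $\vp$ is a normal complex linear functional with $\|\vp\|=1$, and $\vp(a)=1$ with $\|a\|\le1$. Writing the polar decomposition $\vp=\psi(u^*\,\cdot\,)$ with $\psi:=|\vp|$ a normal state, $u$ a partial isometry, and $s:=u^*u=\supp\psi$, I record that $\psi(y(1-s))=0$ for all $y$ and $\vp(u)=\psi(s)=1$.

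The crux is two one-sided absorption identities: (R) $\psi(z\,a^*a)=\psi(z)$ and (L) $\psi(u^*aa^*w)=\psi(u^*w)$ for all $z,w$. For (R), expand
\[
0\le\psi\big((u-a)^*(u-a)\big)=\psi(s)-\psi(u^*a)-\psi(a^*u)+\psi(a^*a).
\]
Since $\psi(u^*a)=\vp(a)=1$, $\psi(a^*u)=\overline{\vp(a)}=1$, and $\psi(a^*a)\le\psi(1)=1$ (as $a^*a\le1$), the right-hand side equals $\psi(a^*a)-1\le0$, forcing $\psi(1-a^*a)=0$. Completing the square in the Cauchy--Schwarz inequality for the state $\psi$ then promotes this to $\psi\big(z(1-a^*a)\big)=0$ for every $z$, which is (R).

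For (L), introduce the normal state $\omega(y):=\psi(u^*yu)$, for which $\omega(1)=\psi(s)=1$. Because $\psi\big(u^*a(1-s)\big)=0$ one has $\omega(au^*)=\psi(u^*a)=1=\omega(uu^*)$, and expanding
\[
0\le\omega\big((a-u)(a-u)^*\big)=\omega(aa^*)-1\le0
\]
gives $\omega(1-aa^*)=0$. A Cauchy--Schwarz estimate for $\psi$ applied to $(1-aa^*)^{1/2}u$ and $(1-aa^*)^{1/2}w$ then yields $\psi\big(u^*(1-aa^*)w\big)=0$, which is (L). Finally, grouping the trailing $a^*a$ and applying (R), then applying (L),
\[
\vp(aa^*xa^*a)=\psi\big((u^*aa^*x)(a^*a)\big)=\psi(u^*aa^*x)=\psi(u^*x)=\vp(x).
\]

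The main obstacle I anticipate is the passage from the scalar vanishing $\psi(1-a^*a)=0$ (respectively $\omega(1-aa^*)=0$) to the full absorption identities (R) and (L): this is exactly where the positivity of $\psi$ (respectively $\omega$) and the Cauchy--Schwarz completing-the-square are indispensable, and it is also the reason one must work with the partial isometry $u$ coming from the polar decomposition rather than with $a$ itself.
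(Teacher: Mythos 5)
Your argument is correct over $\IC$, but it takes a genuinely different route from the paper. The paper's proof never leaves $A$: from the row factorization $(1-aa^*)x+a=[\begin{smallmatrix} (1-aa^*) & a \end{smallmatrix}][\begin{smallmatrix} x \\ 1 \end{smallmatrix}]$ and $\|(1-aa^*)^2+aa^*\|\le 1$ it gets $\|(1-aa^*)x+a\|\le\sqrt{1+\|x\|^2}$, hence $|\lambda\vp((1-aa^*)x)+1|^2\le 1+|\lambda|^2\|x\|^2$ for all scalars $\lambda$, which forces $\vp((1-aa^*)x)=0$; the right-hand absorption is symmetric, and composing the two gives the lemma. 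This is elementary, needs no positivity, no bidual, no polar decomposition, and works verbatim for real-linear $\vp$ and for real $\mathrm{C}^*$-alge\-bras (take $\lambda$ real). Your route through $A^{**}$ and the polar decomposition $\vp=\psi(u^*\,\cdot\,)$ (Section III.4 in \cite{takesaki}) is sound where it applies: the expansions of $\psi((u-a)^*(u-a))$ and $\omega((a-u)(a-u)^*)$ do yield $\psi(1-a^*a)=0$ and $\omega(1-aa^*)=0$, and Cauchy--Schwarz upgrades these to your identities (R) and (L) exactly as you describe, after which the composition $\vp(aa^*xa^*a)=\psi(u^*x)=\vp(x)$ is correct. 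What your version buys is structural insight (the modulus $|\vp|$ is absorbed by $a^*a$, and $a$ behaves like the partial isometry $u$ against the support); what it costs is heavier machinery, and note that the passage to $A^{**}$ is needed even when $A$ is unital, since the polar decomposition of a generic bounded functional lives in the bidual.

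There is one incomplete reduction: by the paper's standing convention $A$ may be a \emph{real} $\mathrm{C}^*$-alge\-bra, and the lemma is later applied in that generality (e.g., in Lemmas~\ref{lem:pure} and \ref{lem:maxface}). The complexification remark you invoke produces $\vp_{\IC}$ only for a real-linear functional on a \emph{complex} Banach space, so it does not apply when $A$ itself is real, and your reduction to a complex von Neumann algebra silently skips this case. The repair is one line: letting $(A_{\IC},\cJ)$ be the complexification, set $\vp'(x):=\vp\bigl(\tfrac{1}{2}(x+\cJ(x))\bigr)$ for $x\in A_{\IC}$; this is a real-linear extension of $\vp$ with $\|\vp'\|\le\|\vp\|=1$ and $\vp'(a)=1$, so after complexifying $\vp'$ you may run your argument on $A_{\IC}$ and recover the identity on $A$ by taking real parts and restricting (note $aa^*xa^*a\in A$ for $x\in A$). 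With that patch your proof is complete.
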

\begin{proof}
This is a simple consequence of the Arens trick.
Since 
\[
\|(1-aa^*)x+a\| = \| \left[\begin{smallmatrix} (1-aa^*) & a \end{smallmatrix}\right]
\left[\begin{smallmatrix} x \\ 1 \end{smallmatrix}\right]\|
\le \| \left[\begin{smallmatrix} (1-aa^*) & a \end{smallmatrix}\right]\|\,
\|\left[\begin{smallmatrix} x \\ 1 \end{smallmatrix}\right]\|
\le \sqrt{1+\|x\|^2},
\]
one has
\[
| \lambda \vp((1-aa^*)x)+1 |^2 = |\vp(\lambda(1-aa^*)x+a)|^2 \le 1+|\lambda|^2\|x\|^2
\]
for all $\lambda$. This is possible only if $\vp((1-aa^*)x)=0$. 
The proof of the other side is similar. 
\end{proof}

We call a closed face $F\subset S_X$ an \emph{intersection face} if 
\[
F=\bigcap\{ E : \mbox{$E\subset S_X$ a maximal face containing $F$}\}.
\]
By Corollary 3.4 in \cite{tanaka2}, every norm-closed face $F$ of 
the unit sphere $S_A$ of a complex $\mathrm{C}^*$-alge\-bra $A$ is an intersection face. 
This fact persists for the real case. Indeed, if $A$ is a real $\mathrm{C}^*$-alge\-bra, 
then $G:=\{ x \in S_{A_{\IC}} : \frac{1}{2}(x+\cJ(x))\in F\}$ is an intersection face of 
$S_{A_{\IC}}$ by Corollary 3.4 in \cite{tanaka2} and so is $F=G\cap S_A$. 

\begin{lem}\label{lem:intersection}
Let $T\colon S_X\to S_Y$ be a surjective isometry 
and $F$ be an intersection face. 
Then, $T(F)$ is an intersection face such that $T(-F)=-T(F)$. 
In particular, there is $\psi\in B_{Y^*}$ such that $\psi=1$ on $T(F)$. 
\end{lem}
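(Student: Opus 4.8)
The plan is to use R.~Tanaka's observation (\cite{tanaka2}) that a surjective isometry of unit spheres sends closed faces to closed faces. Applying it to both $T$ and $T^{-1}$, which are surjective isometries, yields an inclusion-preserving bijection between the closed faces of $S_X$ and those of $S_Y$; in particular maximal faces correspond to maximal faces, and $E'\subset S_Y$ is a maximal face containing $T(F)$ precisely when $E'=T(E)$ for some maximal face $E\subset S_X$ containing $F$. Since $T$ is injective it commutes with arbitrary intersections, $T(\bigcap_i E_i)=\bigcap_i T(E_i)$. Hence, writing $F=\bigcap\{E:E\supset F\text{ maximal}\}$, I would compute $T(F)=\bigcap\{T(E):E\supset F\text{ maximal}\}=\bigcap\{E':E'\supset T(F)\text{ maximal}\}$, so that $T(F)$ is again an intersection face. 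The last assertion then drops out: $T(F)$ is a nonempty proper closed face, hence lies in a maximal face, which has the form $\{y\in S_Y:\psi(y)=1\}$ for some $\psi\in\ext B_{Y^*}\subset B_{Y^*}$, and this $\psi$ equals $1$ on $T(F)$.

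The remaining and central point is the antipodal identity $T(-F)=-T(F)$. I would first reduce it to maximal faces. If $F=\bigcap_i E_i$ with the $E_i$ the maximal faces containing $F$, then $-F=\bigcap_i(-E_i)$ and each $-E_i$ is again maximal; so once $T(-E)=-T(E)$ is known for maximal faces, the fact that $T$ commutes with intersections gives $T(-F)=T(\bigcap_i(-E_i))=\bigcap_i(-T(E_i))=-T(F)$. Thus the whole identity rests on the maximal case, which is the \emph{heart of the matter}.

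For a maximal face $E$ -- equivalently a maximal convex subset of $S_X$ -- I would establish the metric characterization that $-E$ is the \emph{unique} maximal face $G$ with $\|x-y\|=2$ for all $x\in E$ and $y\in G$. Since $T$ is a surjective isometry carrying maximal faces to maximal faces, it automatically preserves this characterization, and $T(-E)=-T(E)$ follows. Existence is immediate: for $x\in E$ and $y=-z\in-E$ with $z\in E$, convexity gives $\tfrac12(x+z)\in E\subset S_X$, whence $\|x-y\|=\|x+z\|=2$. For uniqueness, suppose $G$ is a maximal face with $\|x-y\|=2$ for all $x\in E,\,y\in G$. Then for $x\in E$ and $w\in-G$ we have $\|x+w\|=2=\|x\|+\|w\|$, so $\tfrac12(x+w)\in S_X$; since $B_X$ is convex the whole segment $[x,w]$ lies on $S_X$, and therefore $\conv(E\cup(-G))\subset S_X$. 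This is a convex subset of $S_X$ containing the maximal convex subset $E$, so it coincides with $E$; thus $-G\subset E$ and, $-G$ being a maximal convex subset as well, $-G=E$, i.e.\ $G=-E$. The step I expect to demand the most care is exactly this convex-hull argument together with the identification of maximal faces with maximal convex subsets of the sphere; once these are in hand, the rest is formal manipulation in the lattice of faces.
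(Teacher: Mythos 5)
Your proposal is correct, and for the intersection-face part it follows the same skeleton as the paper: maximal faces are preserved under $T$ and $T^{-1}$, a bijection commutes with arbitrary intersections, so $T(F)=\bigcap T(E)=\bigcap\{E'\ \text{maximal}: E'\supset T(F)\}$, and the functional $\psi$ comes from a maximal face containing $T(F)$ (the paper gets this from Eidelheit's separation theorem, Lemma 3.1 in \cite{tanaka2}, which is exactly what underlies your representation $E'=\{y:\psi(y)=1\}$; note $\psi\in B_{Y^*}$ suffices, extremality is not needed). The genuine difference is at what you rightly call the heart of the matter: the paper simply cites \cite{cd,tingley} and Proposition 2.3 in \cite{mori} for the facts that $T(E)$ is a maximal face with $T(-E)=-T(E)$, whereas you reprove the antipodal identity from scratch via the metric characterization of $-E$ as the unique maximal face lying everywhere at distance $2$ from $E$. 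Your existence and uniqueness arguments are sound: the segment $[x,w]$ lies on $S_X$ once its midpoint does (convexity of the norm), hence $\conv(E\cup(-G))\subset S_X$, and two applications of maximality force $-G=E$. This buys a self-contained, elementary proof that needs only maximal-face preservation and not Tingley's pointwise antipodal theorem, where the paper outsources to the literature. One caveat: you attribute to Tanaka the statement that a surjective isometry of unit spheres sends closed faces to closed faces, and deduce an inclusion-preserving bijection on \emph{all} closed faces; that observation is proved in \cite{tanaka2} for $\mathrm{C}^*$-algebra domains (it rests on every closed face of $S_A$ being an intersection face, Corollary 3.4 there) and is not available for the general normed spaces $X,Y$ of this lemma --- indeed, the present lemma is precisely the device that substitutes for it in general. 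What is true in general, and is all your argument actually uses, is the preservation of maximal faces (maximal convex subsets of the sphere) from \cite{cd,tingley}; so this is a citation slip, not a mathematical gap.
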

\begin{proof}
Recall from \cite{cd,tingley} and Proposition 2.3 in \cite{mori} that 
for any maximal face $E\subset S_X$, its image $T(E)\subset S_Y$ 
is a maximal face such that $T(-E)=-T(E)$. 
Thus $T(F)=T(\bigcap E)=\bigcap T(E)$ is an intersection face such that 
$T(-F)=T(\bigcap -E)=-\bigcap T(E)=-T(F)$. 
The last assertion follows from Eidelheit's separation 
theorem (see, e.g., Lemma 3.1 in \cite{tanaka2}).
\end{proof}

\begin{proof}[Proof of the Mazur--Ulam property for $\IB(\cH)$, $\dim\cH>2$]
Let $\xi,\eta\in\cH$ be unit vectors and $\vp(\,\cdot\,)=\ip{\,\cdot\,\xi,\eta}$ 
be the corresponding linear functional on $A:=\IB(\cH)$. 
We consider the maximal face 
\[
E_\vp:=\{ x\in S_A : \vp(x)=1\} = \{ x\in S_A : x\xi=\eta \}.
\]
By Lemma~\ref{lem:intersection}, there is $\psi\in B_{Y^*}$ 
such that $\psi=1$ on the face $T(E_\vp)$. 
We will show $\Re\vp=\psi\circ T$, which along with Lemma~\ref{lem:cond} 
proves Theorem~\ref{thm:mau} for $A=\IB(\cH)$. 

Let $v\in A$ denote the rank-one partial isometry such that $v\xi=\eta$. 
Let $u\in S_A$ be an arbitrary unitary element. 
Since $\dim\cH>2$, there is a sub-partial isometry $w$ of 
$u$ such that $w\perp v$, for example, pick a unit 
vector $\zeta \in \{\xi,u^*\eta\}^\perp$ and set $w=up_\zeta$, 
where $p_\zeta$ is the rank-one projection corresponding to $\zeta$.
Put  
\[
F(w):=\{ x\in S_A : xw^*w=w \} = \{ w+ y : y\in B_{(1-ww^*)A(1-w^*w)} \}
\]
the corresponding closed face, which contains $u$. 
Since  $T|_{F(w)}$ is affine by Lemma~\ref{lem:intersection} and Corollary~\ref{cor:man}, 
there is $\theta\in B_{A^*}$ such that 
$(\psi\circ T)(w+y)=(\psi\circ T)(w)+\Re\theta(y)$ for $y\in B_{(1-ww^*)A(1-w^*w)}$.
Since $w\pm v \in F(w)\cap (\pm E_{\vp})$, one has 
$\pm 1 = (\psi\circ T)(w\pm v)=(\psi\circ T)(w)\pm\Re\theta(v)$. 
This implies that $(\psi\circ T)(w)=0$ and $\theta(v)=1$, 
which means $\theta=\vp$ by Lemma~\ref{lem:arens} 
and the fact that $vv^*xv^*v=\vp(x)v$ for all $x\in A$.  
It follows that $(\psi\circ T)(u)=\Re\vp(u)$ for every unitary element $u$. 
Now let $w'$ be an arbitrary rank-one partial isometry. 
Since $T|_{F(w')}$ is affine and $F(w')$ is the closed convex hull 
of the unitary elements in $F(w')$, 
the previous result implies that $\psi\circ T=\Re\vp$ on $F(w')$.  
Since $\bigcup_{w'} F(w')$ is dense in $S_A$, we conclude 
by continuity that $\psi\circ T=\Re\vp$.
\end{proof}
\section{Convex combinations of a face and its opposite}
For any face $E\subset S_X$ and $\lambda\in[-1,1]$, put 
\[
E_\lambda := \{ x\in S_X :  \dist(x,E)\le1-\lambda\mbox{ and }\dist(x,-E)\le1+\lambda\}.
\]
Since $\dist(E,-E)=2$ for any convex subset $E\subset S_X$, 
the inequalities defining $E_\lambda$ are actually equalities. 
We upgrade Lemma~\ref{lem:intersection} as follows.

\begin{lem}\label{lem:tflambda}
Let $T\colon S_X\to S_Y$ be an surjective isometry and 
$F\subset S_X$ be an intersection face. 
Then, $T(F)$ is an intersection face such that $T(F_\lambda)=T(F)_\lambda$ 
for every $\lambda\in[-1,1]$.
\end{lem}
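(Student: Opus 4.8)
The plan is to observe that Lemma~\ref{lem:intersection} already does most of the work: it tells us that $T(F)$ is an intersection face and, crucially, that $T(-F)=-T(F)$. In particular $T(F)$ is convex, so $T(F)_\lambda$ is a well-defined subset of $S_Y$ and the statement $T(F_\lambda)=T(F)_\lambda$ even makes sense. The only genuinely new assertion is this identity, and since the defining conditions of $E_\lambda$ are phrased entirely in terms of the distances $\dist(\cdot,F)$ and $\dist(\cdot,-F)$, I expect it to reduce to the fact that a surjective isometry transports these distances faithfully.

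Concretely, first I would record that for every $x\in S_X$ one has $\dist(x,F)=\dist(T(x),T(F))$: since $T$ is an isometry, $\|x-f\|=\|T(x)-T(f)\|$ for each $f\in F$, and as $f$ runs over $F$ the point $T(f)$ runs over all of $T(F)$, so the two defining infima coincide. Applying the same reasoning to $-F$ and then invoking $T(-F)=-T(F)$ from Lemma~\ref{lem:intersection} gives $\dist(x,-F)=\dist(T(x),-T(F))$. Feeding these two equalities into the definition shows, for a fixed $\lambda\in[-1,1]$, that $x\in F_\lambda$ holds if and only if $T(x)\in T(F)_\lambda$. Because $T$ is a bijection of $S_X$ onto $S_Y$, this equivalence yields both inclusions at once, i.e.\ $T(F_\lambda)=T(F)_\lambda$.

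I do not anticipate a serious obstacle here; the proof is essentially the metric-invariance of the two defining conditions, and the real input, the symmetry $T(-F)=-T(F)$, has already been supplied by Lemma~\ref{lem:intersection} (which is exactly why $F$ is required to be an intersection face). The only points demanding a little care are the distance identity for $-F$, where one must apply $T(-F)=-T(F)$ rather than naively expecting $T$ to commute with negation on all of $S_X$, and the use of surjectivity to obtain the inclusion $T(F)_\lambda\subseteq T(F_\lambda)$. The remark that the inequalities defining $E_\lambda$ are in fact equalities (because $\dist(E,-E)=2$ for convex $E$) plays no role in the argument; it only serves to simplify the later bookkeeping.
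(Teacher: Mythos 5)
Your proof is correct and is exactly the argument the paper intends: Lemma~\ref{lem:intersection} supplies $T(-F)=-T(F)$, and the identity $T(F_\lambda)=T(F)_\lambda$ then follows from the isometric invariance of $\dist(\cdot,F)$ and $\dist(\cdot,-F)$ together with surjectivity of $T$ (the paper states the lemma without proof, treating precisely this reduction as immediate). Your side remarks — that negation is applied via $T(-F)=-T(F)$ rather than pointwise, and that the equality-versus-inequality observation is not needed — are also accurate.
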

\begin{lem}\label{lem:tflconv}
For any face $E\subset S_X$, $\lambda_1,\lambda_2\in [-1,1]$, and $\alpha\in[0,1]$, 
one has 
\[
\bigl(\alpha E_{\lambda_1}+(1-\alpha)E_{\lambda_2}\bigr)\cap S_X
 \subset E_{\alpha\lambda_1+(1-\alpha)\lambda_2}.	
\]
\end{lem}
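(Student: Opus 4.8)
The plan is to exploit the fact that the distance to a convex set is a convex function, applied simultaneously to the two convex sets $E$ and $-E$. First I would take $x_1\in E_{\lambda_1}$ and $x_2\in E_{\lambda_2}$ and set $x=\alpha x_1+(1-\alpha)x_2$, assuming $\|x\|=1$ so that $x\in S_X$ (otherwise there is nothing to prove). Write $\mu=\alpha\lambda_1+(1-\alpha)\lambda_2$, which lies in $[-1,1]$ as a convex combination of points of $[-1,1]$, so that $E_\mu$ is defined. The goal then reduces to verifying the two defining inequalities $\dist(x,E)\le 1-\mu$ and $\dist(x,-E)\le 1+\mu$.

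For the first, I would recall that for any convex set $C$ the function $z\mapsto\dist(z,C)$ is convex. Concretely, given $\ve>0$, choose $c_1,c_2\in E$ with $\|x_i-c_i\|\le\dist(x_i,E)+\ve$; since $E$ is a face, hence convex, the point $\alpha c_1+(1-\alpha)c_2$ again lies in $E$, and the triangle inequality gives $\|x-(\alpha c_1+(1-\alpha)c_2)\|\le\alpha\|x_1-c_1\|+(1-\alpha)\|x_2-c_2\|$. Letting $\ve\to0$ yields $\dist(x,E)\le\alpha\dist(x_1,E)+(1-\alpha)\dist(x_2,E)$. Combined with the bounds $\dist(x_i,E)\le 1-\lambda_i$ coming from $x_i\in E_{\lambda_i}$, this produces $\dist(x,E)\le\alpha(1-\lambda_1)+(1-\alpha)(1-\lambda_2)=1-\mu$.

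The second inequality is obtained by the identical argument with $-E$ in place of $E$, using that $-E$ is convex and that $\dist(x_i,-E)\le 1+\lambda_i$; it gives $\dist(x,-E)\le\alpha(1+\lambda_1)+(1-\alpha)(1+\lambda_2)=1+\mu$. Together these two estimates place $x$ in $E_\mu$, which is the asserted inclusion. Since the whole argument rests only on the convexity of $E$ (and of $-E$) together with the subadditivity of distances under convex combinations, I do not anticipate a genuine obstacle; the only points to keep track of are that $\mu\in[-1,1]$, which is automatic, and that one should take infimizing sequences in $E$ rather than assuming the infima are attained.
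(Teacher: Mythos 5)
Your proposal is correct and follows essentially the same route as the paper: both proofs use the convexity of $E$ (and of $-E$) together with near-infimizing points and the triangle inequality to show $\dist(x,E)\le\alpha\dist(x_1,E)+(1-\alpha)\dist(x_2,E)\le 1-\mu$, and symmetrically for $-E$. The only cosmetic difference is that the paper exploits the remark that $\dist(E,-E)=2$ forces the defining inequalities of $E_\lambda$ to be equalities, whereas you work directly with the inequalities from the definition, which suffices just as well.
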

\begin{proof}
For given $x_i\in E_{\lambda_i}$,  
put $x_3:= \alpha x_1+(1-\alpha)x_2$ and $\lambda_3:=\alpha\lambda_1+(1-\alpha)\lambda_2$. 
For any $\ve>0$, take $y_i\in E$ such that 
$\| y_i - x_i \|\approx_{\ve} 1-\lambda_i$. 
Then, $\alpha y_1+(1-\alpha)y_2\in E$ and 
\[
\dist(x_3, E)\le \alpha\|x_1-y_1\|+(1-\alpha)\| x_2-y_2\|
\approx_\ve 1-\lambda_3.
\]
Since $\ve>0$ was arbitrary, 
one has $\dist(x_3, E)\le 1-\lambda_3$. 
The proof of the other inequality is similar and one has $x_3\in E_{\lambda_3}$.
\end{proof}
\pagebreak[2]

Combining Lemmas~\ref{lem:tflambda} and \ref{lem:tflconv}, we obtain the following. 
\begin{lem}\label{lem:tfl}
Let $T\colon S_X\to S_Y$ be a surjective isometry and $E\subset S_X$ 
be an intersection face. 
Then, for every $\lambda_1,\lambda_2\in [-1,1]$ and $\alpha\in[0,1]$, 
one has 
\[
\bigl( \alpha T(E_{\lambda_1})+(1-\alpha)T(E_{\lambda_2}) \bigr) \cap S_Y\subset T(E_{\alpha\lambda_1+(1-\alpha)\lambda_2}).	
\]
\end{lem}
\section{Facial structure of a $\mathrm{C}^*$-alge\-bra}
The following first three results are about Kadison's transitivity theorem. 
They are all well-known in the complex case. 

\begin{lem}[Kadison's transitivity theorem]\label{lem:kadison}
Let $A$ be a $\mathrm{C}^*$-alge\-bra and 
$p\in A^{**}$ be a finite rank projection.
Then, for any norm-one (resp.\ self-adjoint) element 
$x\in pA^{**}p$, there is a norm-one (resp.\ self-adjoint) 
element $a\in A$ such that $ap= x = pa$.
For any unitary element $x$ in $\pm\cU_0(pA^{**}p)$, the connected component of 
$\pm1$, there is a unitary element $a$ in $A$ (or the unitization of, if $A$ is not unital) 
such that $ap= x = pa$.
\end{lem}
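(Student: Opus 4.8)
The plan is to prove Kadison's transitivity theorem in the stated form, treating the complex case as essentially known and then transferring to the real case via the complexification $A_{\IC}$. For the complex self-adjoint and norm-one statements, the classical Kadison transitivity theorem gives, for a finite rank projection $p\in A^{**}$ and a target $x\in pA^{**}p$, an element $a\in A$ with $ap=x$; the content here is to arrange $ap=x=pa$ simultaneously and with control of the norm (or self-adjointness). First I would reduce to the self-adjoint case by writing a general $x$ in terms of its real and imaginary parts, and handle the two-sided condition $ap=x=pa$ by a standard symmetrization trick: if $b\in A$ satisfies $bp=x$, then replacing $b$ by an appropriate average forces the left and right actions to agree on the finite-dimensional corner $pA^{**}p$, since $p$ has finite rank and $pA^{**}p$ is a finite-dimensional $\mathrm{C}^*$-algebra on which the GNS/Kaplansky density machinery applies cleanly.

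The norm control is where Kadison's theorem is genuinely used: I would invoke the strengthened form which produces $a\in A$ with $\|a\|=\|x\|$ (respectively $a=a^*$ when $x=x^*$) and $ap=x=pa$. The key step is that $pA^{**}p\cong \IM_n(\IC)$ (a finite-dimensional matrix algebra, possibly a direct sum) because $p$ has finite rank, so one can choose matrix units supported near $p$ and apply transitivity entrywise, then assemble. For the unitary statement I would use the connectedness hypothesis: an element $x\in\cU_0(pA^{**}p)$ lies in the connected component of the identity, hence $x=\exp(ih)$ for some self-adjoint $h\in pA^{**}p$; applying the self-adjoint part of the theorem to $h$ yields a self-adjoint $k\in A$ with $kp=h=pk$, and then $a:=\exp(ik)$ is the desired unitary in $A$ (or its unitization), satisfying $ap=\exp(ik)p=\exp(ih)p=xp=x$ and similarly on the right. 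The sign $\pm$ is absorbed by noting $\exp(i\pi)=-1$ gives the component of $-1$.

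For the real case, the plan is to pass to the complexification $A_{\IC}$ with its conjugate-linear involution $\cJ$, as set up in the Notations. Given a real projection $p\in A^{**}\subset A_{\IC}^{**}$ and a target $x\in pA^{**}p$, I would first apply the complex theorem to obtain $a_{\IC}\in A_{\IC}$ with $a_{\IC}p=x=pa_{\IC}$ and the appropriate norm or self-adjointness, and then average $a_{\IC}$ with $\cJ(a_{\IC})$ to land inside the real part $A=\{b:\cJ(b)=b\}$. Here I would use that $\cJ(p)=p$ and $\cJ(x)=x$ (since $x\in pA^{**}p$ and $p,x$ are real), so that $\frac{1}{2}(a_{\IC}+\cJ(a_{\IC}))$ still satisfies the interpolation condition on $p$; the norm and self-adjointness are preserved because $\cJ$ is an isometric $*$-map. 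For the unitary statement the same exponential argument runs inside $A$ once $h$ and $k$ are chosen real.

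The main obstacle I expect is the norm-preservation together with the two-sided condition $ap=x=pa$: the naive symmetrization $\frac12(b+\tilde b)$ (where $\tilde b$ achieves $pb=x$) need not preserve the norm exactly, so the delicate point is to invoke the correct strong form of Kadison transitivity that simultaneously controls the norm \emph{and} yields a two-sided interpolant, rather than patching two one-sided solutions together. In the real case, a secondary subtlety is verifying that averaging with $\cJ$ does not spoil the exact norm equality $\|a\|=\|x\|$; this requires that $\|x\|$ itself be computed in $A_{\IC}$ and be $\cJ$-invariant, which follows from the remark in the Notations that $\cJ$ is a conjugate-linear $*$-automorphism and hence isometric.
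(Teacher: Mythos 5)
Your norm-one and self-adjoint cases are essentially the paper's proof: quote the complex theorem, then in the real case apply it inside $A_{\IC}$ and average $a:=\frac12(a'+\cJ(a'))$, using $\cJ(p)=p$ and $\cJ(x)=x$; the exact norm equality needs no special care, since $ap=x$ already forces $\|a\|\geq\|x\|$ while averaging gives $\leq$, and the two-sided, norm-controlled complex statement is handled by citation in the paper as well (Theorem II.4.15 in \cite{takesaki}), so the ``main obstacle'' you flag is resolved by invoking the literature rather than by your matrix-unit sketch. The genuine gap is in the unitary assertion for real $A$, where your closing claim --- that ``the same exponential argument runs inside $A$ once $h$ and $k$ are chosen real'' --- asks for something that cannot be arranged. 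If $k$ were a self-adjoint element of the real algebra $A$, i.e.\ $\cJ(k)=k$, then $\cJ(\exp(\sqrt{-1}k))=\exp(-\sqrt{-1}\,\cJ(k))=\exp(\sqrt{-1}k)^*$, so $\exp(\sqrt{-1}k)$ lies in (the unitization of) $A$ only in trivial cases. The correct parity is the opposite one: one needs $h=h^*$ with $\cJ(h)=-h$, so that $\sqrt{-1}h$ is a \emph{skew-adjoint} element of the real part, and correspondingly the interpolant $b\in A_{\IC}$ must satisfy both $b=b^*$ and $\cJ(b)=-b$, which is produced by the averaging $b\mapsto\frac12(b-\cJ(b))$, not the $\frac12(b+\cJ(b))$ you use elsewhere. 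This is exactly how the paper proceeds, with $h=\frac{1}{\sqrt{-1}}\log(x+p^\perp)$ and $a=\exp(\sqrt{-1}b)$.

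A second, related gap: you take the logarithm $x=\exp(\sqrt{-1}h)$ for granted. In the complex case this is the spectral theorem in the finite-dimensional algebra $pA^{**}p$, but in the real case an element of $\cU_0(pA^{**}p)$ can have $-1$ in its spectrum (e.g.\ $-1\in\cU_0(\IM_2(\IR))=SO(2)$), so no functional-calculus logarithm with the required $\cJ$-parity is available outright. The paper's device is the factorization $x=\pm x_1\cdots x_n$ with $x_k\in\cU_0(pA^{**}p)$ and $\|x_k-p\|<2$, reducing to the case $-1\notin\sigma(x+p^\perp)$ where the principal branch of $\log$ applies; since each interpolating unitary commutes with $p$, their product (times $-1$ in the unitization, if needed) still interpolates. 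Alternatively your route could be repaired Lie-theoretically: $\cU_0(pA^{**}p)$ is a compact connected group, so $x=\exp(\kappa)$ for some skew-adjoint $\kappa$ in the real algebra $pA^{**}p$, and one interpolates $\kappa$ by a skew-adjoint element of $A$ (first symmetrize $c\mapsto\frac12(c-c^*)$, then average with $\cJ$, noting $\cJ(\kappa)=\kappa$). But some such device must be supplied: as written, the real unitary case is unproven. Note finally that your sign remark ($\exp(\sqrt{-1}\pi)=-1$) is a complex-case fact; in the real case the sign $\pm$ is absorbed simply by multiplying the interpolating unitary by $-1$ in the unitization.
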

\begin{proof}
In the complex case, this follows from Theorem II.4.15 in \cite{takesaki}.
We deal with the case of a real $\mathrm{C}^*$-alge\-bra $A$. 
Let $(A_{\IC},\cJ)$ be its complexification. 
Then for every norm-one (resp.\ self-adjoint) element $x\in pA^{**}p$, 
there is a norm-one (resp.\ self-adjoint) element $a'\in A_{\IC}$ 
such that $a'p= x = pa'$. Thus $a:=\frac{1}{2}(a'+\cJ(a'))\in A$ 
satisfies the desired condition. 
Now let $x\in \pm\cU_0(pA^{**}p)$ be a unitary element. 
Since $x=\pm x_1\cdots x_n$ for some $x_k\in\cU_0(pA^{**}p)$ 
with $\| x_k - p \|<2$, we may assume that $-1$ is not in the spectrum of $x$. 
Let $\log\exp(\sqrt{-1}\lambda)=\sqrt{-1}\lambda$ for $\lambda\in(-\pi,\pi)$. 
Then, $h:=\frac{1}{\sqrt{-1}}\log (x+p^\perp) \in A_{\IC}^{**}$ 
is a self-adjoint element such that $hp=ph$ and 
$\cJ(h)=-\frac{1}{\sqrt{-1}}\overline{\log} (x^*+p^\perp)=-h$.
Similarly as above, there is a self-adjoint element $b \in A_{\IC}$ 
such that $bp = php = pb$ and $\cJ(b)=-b$. 
It follows that $a=\exp(\sqrt{-1} b)$ is 
a unitary element in (the unitization of) $A$ 
such that $ap= x = pa$. 
\end{proof}

The assumption on the unitary element $x$ cannot be removed in general; 
E.g., there is no unitary element $f$ in $\{ f\in C([0,1],\IM_2(\IR)) : f(0)\in\IR 1\}$ 
such that $f(1)=[\begin{smallmatrix} 1 & 0 \\ 0 & -1 \end{smallmatrix}]$.
However, this is a rather special case. 
The finite dimensional real von Neumann algebra, $pA^{**}p$ in the above lemma, 
is a direct sum of $\IM_k(\IF)$'s, where $\IF\in\{\IR,\IC,\IH\}$. 
Among them, the unitary groups of $\IM_k(\IC)$ and $\IM_k(\IH)$ are connected, 
while the unitary group of $\IM_k(\IR)$ has two connected components 
according to the sign of the determinant. 
Thus, unless there is a nonzero central projection 
$z\le p$ in $A^{**}$ such that $zA^{**}\cong\IM_k(\IR)$,
one can inflate $p$ to a finite rank projection $p_0$ and $x$ to a 
unitary element in $\cU_0(p_0A^{**}p_0)$. 

Recall that a linear functional $\vp$ on a $\mathrm{C}^*$-alge\-bra $A$ 
is called a \emph{state} if it is positive and has norm one. 
It is said to be \emph{pure} if it is an extreme point of the state space. 

\begin{lem}\label{lem:pure}
Let $\vp$ be a pure state on a $\mathrm{C}^*$-alge\-bra $A$. 
Then, $p:=\supp(\vp)$ is a minimal projection in $A^{**}$. 
Let $L:=\{ a\in A : \vp(a^*a)=0\}$ be the corresponding left ideal and 
$(e_n)_n$ be an approximate unit for the $\mathrm{C}^*$-sub\-alge\-bra $L\cap L^*$.
Then, $e_n\to p^\perp$ ultrastrongly. 
In particular, any $\theta\in B_{A^*}$ such that $\lim_n\theta(1-e_n)=1$ 
coincides with $\vp$. 
\end{lem}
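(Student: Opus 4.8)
The plan is to establish the three assertions in turn, the first and third being short and the second carrying the real content.

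For minimality, I would extend $\vp$ to a normal state on $A^{**}$, still denoted $\vp$; by the definition of the support one has $\vp(x)=\vp(pxp)$ for all $x$, and $\vp$ is faithful on the corner $pA^{**}p$. If $p$ were not minimal, write $p=p_1+p_2$ with $p_1,p_2$ nonzero orthogonal subprojections; faithfulness gives $\vp(p_i)>0$, and then $\vp=\vp(p_1)\vp_1+\vp(p_2)\vp_2$ with $\vp_i(\,\cdot\,)=\vp(p_i\,\cdot\,p_i)/\vp(p_i)$ two distinct states (their supports dominate the orthogonal $p_1,p_2$), contradicting purity. Hence $p$ is minimal, so $pA^{**}p=\IR,\IC$ or $\IH$ in the real case.

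For the convergence, I would first identify the algebra. Since for $x\ge 0$ one has $\vp(x)=0$ iff $xp=0$, it follows that $L=\{a\in A: ap=0\}$ and hence $L\cap L^*=\{a\in A: ap=pa=0\}=A\cap p^\perp A^{**}p^\perp$, the hereditary $\mathrm{C}^*$-subalgebra corresponding to $p^\perp$. Its approximate unit $(e_n)$ increases ultrastrongly to a projection $q:=\sup_n e_n$, the largest open projection dominated by $p^\perp$; clearly $q\le p^\perp$, and the task is to prove equality, equivalently that $p$ is a closed projection. Writing $z$ for the central cover of the GNS representation $\pi_\vp$ (so $zA^{**}\cong\IB(\cH_\vp)$ and $p$ is the rank-one projection onto $\xi_\vp$), I would argue summand by summand. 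On $zA^{**}$, Kadison's transitivity theorem (Lemma~\ref{lem:kadison}) lets me lift every finite-rank projection $f\le z-p$ to some $a\in A$ with $0\le a\le 1$ and $\pi_\vp(a)=f$; as $p\le z$ and $fp=0$, this gives $ap=pa=0$, so $a\in L\cap L^*$ and $f=za\le zq$, whence $zq=z-p$. On each summand disjoint from $\pi_\vp$, transitivity applied to $\pi_\vp$ together with that summand produces elements of $A$ that kill $\xi_\vp$ while acting arbitrarily there, so these summands are exhausted as well and $q\ge z^\perp$. Combining, $q=p^\perp$ and $e_n\to p^\perp$ ultrastrongly.

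Finally, for the ``in particular'': since $A^*$ is the predual of $A^{**}$, every $\theta\in B_{A^*}$ is a normal functional on $A^{**}$, so $\theta(1-e_n)\to\theta(1-p^\perp)=\theta(p)$, and the hypothesis forces $\theta(p)=1$, hence $\|\theta\|=1$. The Arens trick (Lemma~\ref{lem:arens}) with $a=p$ then yields $\theta(x)=\theta(pxp)$ for all $x$. In the complex case $pA^{**}p=\IC p$ and the conclusion is immediate; in the real case $pA^{**}p$ is $\IR,\IC$ or $\IH$, on each of which there is a unique norm-one functional taking the value $1$ at the unit, namely the canonical trace, and the same applies to $\vp|_{pA^{**}p}$. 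Thus $\theta$ and $\vp$ agree on $pA^{**}p$, and therefore $\theta(x)=\theta(pxp)=\vp(pxp)=\vp(x)$, i.e.\ $\theta=\vp$. The main obstacle is the middle step, the closedness of the support of a pure state; the delicate point is exhausting all central summands disjoint from $\pi_\vp$ (there may be a continuum of them, and they need not be type~I), where one must produce elements of $L\cap L^*$ that are large on a prescribed summand while annihilating $\xi_\vp$, uniformly enough to conclude $q\ge z^\perp$.
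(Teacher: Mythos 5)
There are two genuine gaps. First, your minimality argument rests on a false identity: if $p=p_1+p_2$, then $\vp(x)=\vp(pxp)=\sum_{i,j}\vp(p_ixp_j)$, and the cross terms $\vp(p_1xp_2)+\vp(p_2xp_1)$ do not vanish in general, so $\vp\neq\vp(p_1)\vp_1+\vp(p_2)\vp_2$ unless the $p_i$ lie in the centralizer of $\vp$ restricted to $pA^{**}p$ --- which you cannot arrange for an arbitrary splitting (already on $\IM_2(\IC)$ with density matrix $h$ not commuting with $p_1$ the identity fails, and one even has $\vp(p_1\,\cdot\,p_1)\not\le\vp$, so there is no cheap repair via ``positive minorants of a pure state are multiples'' either). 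The correct complex-case route is through the GNS picture (irreducibility gives $zA^{**}\cong\IB(\cH_\vp)$ and $p$ the rank-one projection onto $\xi_\vp$), but note that this picture, which you invoke later, is \emph{false in the real case}: for $A=\IC$ viewed as a real $\mathrm{C}^*$-alge\-bra the support of the pure state $\Re$ has rank two in $A_\IC^{**}$. The paper handles this by considering the weak$^*$-closed face $\Omega$ of states $\vp_0$ on $A_\IC$ with $\Re\vp_0=\vp$, taking $\vp_0\in\ext\Omega$ pure, writing $\vp_\IC=\frac{1}{2}(\vp_0+\bar\vp_0)$ to conclude that $p$ has rank at most two in $A_\IC^{**}$, and only then deducing minimality from purity.

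Second, the gap you flag yourself at the end is fatal for your strategy: Kadison transitivity only produces elements agreeing with a prescribed behavior on \emph{finite-rank} corners, and the central summands under $z^\perp$ may contain no nonzero finite-rank projections at all (they need not be atomic, let alone type I), so the exhaustion $q\ge z^\perp$ cannot be run summand by summand, and no uniformity argument is supplied. This is precisely where the paper's proof takes a different, global route that never decomposes $A^{**}$ centrally: using transitivity with a finite-rank projection $q\ge p$ (finite because $p$ is of finite rank in the complexification) it shows $\{x\in A: pxp=0\}=L+L^*\subset\{x\in A: e^\perp xe^\perp=0\}$, so that $pAp\ni pxp\mapsto e^\perp xe^\perp$ is a well-defined bounded linear map on the \emph{finite-dimensional} space $pA^{**}p$, hence ultraweakly continuous; by ultraweak density of $A$ in $A^{**}$ one may evaluate at $x=p^\perp$, getting $e^\perp p^\perp e^\perp=0$, i.e.\ $e^\perp\le p$, which together with the easy inclusion $e^\perp\ge p$ yields $e=p^\perp$ for all summands at once. (Your intermediate step on the summand $z$ also needs a small repair --- transitivity gives $a$ with $a(f+p)=f=(f+p)a$, not $\pi_\vp(a)=f$; one should pass to $a^2\in L\cap L^*$ and use $qb=b$ for $b\in L\cap L^*$ to get $f\le q$ --- but that part is patchable.) Your final ``in particular'' step is essentially correct and agrees with the paper: normality of $\theta$ gives $\theta(e^\perp)=1$, the Arens trick applies in $A^{**}$, and one concludes by uniqueness of the state on $pA^{**}p\cong\IR,\,\IC,\,\IH$.
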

\begin{proof}
We view $\vp$ as a normal state on the second dual $A^{**}$.
Recall that $p:=\supp(\vp)$ is the smallest projection in $A^{**}$ such that 
$\vp(p)=1$. 
In the complex case, it is well-known that $p$ has rank one. 
In the real case, the set $\Omega$ of states $\vp_0$ on $A_{\IC}$ 
such that $\Re\vp_0=\vp$ on $A$ is a weak$^*$-closed face of the state 
space of $A_{\IC}$ and any pure state $\vp_0\in\ext\Omega$ satisfies 
$\vp_{\IC}=\frac{1}{2}(\vp_0+\bar{\vp}_0)$ on $A_{\IC}$, 
where $\bar{\vp}_0(a)=\overline{\vp_0(\cJ(a))}$ and 
$\vp_{\IC}$ is the complexification of $\vp$, 
which is the unique state extension of $\vp$ on 
$A_{\IC}$ such that $\vp_{\IC}=\bar{\vp}_{\IC}$.  
It follows that $p=\supp(\vp_{\IC})$ has rank at most two in $A_{\IC}^{**}$. 
Hence, by compactness, for any nonzero projection $q\le p$ 
there is $\ve>0$ such that $\ve\vp(q\,\cdot\,q)\le\vp(p\,\cdot\,p)$.
Since $\vp(p\,\cdot\,p)=\vp(\,\cdot\,)$ is a pure state, this implies $q=p$, 
proving that the projection $p$ is minimal.
We note that there is only one state on $pA^{**}p\cong\IR,\,\IC,\,\IH$. 
Since $e_ne_m \to e_m$ as $n\to\infty$, the ultrastrong limit 
$e:=\sup e_n\in A^{**}$ is a projection such that $e^\perp\geq p$.  
By Kadison's transitivity theorem (Lemma~\ref{lem:kadison}), one has 
\[
\{ x \in A : pxp=0 \} = L + L^* \subset \{ x\in A : e^\perp x e^\perp=0\}. 
\]
Indeed, for any $x\in A$ such that $pxp=0$, there is a finite rank 
projection $q\geq p$ in $A^{**}$ 
such that $px=pxq$ and there is $a\in A$ such that $aq = pxq = qa$.
Since $ap=pxp=0$ and $p(x-a)=pxq-pqa=0$, one has $a\in L$ and $x-a\in L^*$. 
The right inclusion is obvious. 
Hence the map $pAp \ni pxp\mapsto e^\perp x e^\perp$ is a well-defined 
continuous linear map. Since $p$ is of finite rank, this map is 
ultraweakly continuous on $pA^{**}p$ and so $e^\perp\le p$. 
This proves $e=p^\perp$. 
If $\theta\in B_{A^*}$ is such that $\lim_n\theta(1-e_n)=1$, 
then $\theta(e^\perp)=1$, which implies that $\theta$ is a state and 
$\theta=\vp$ by Lemma~\ref{lem:arens} 
and the uniqueness of the state on $e^\perp A^{**} e^\perp$.
\end{proof}
\pagebreak[2]

Recall that for any maximal face $E\subset S_X$ there is $\vp\in \ext B_{X^*}$
such that 
\[
E=E_\vp:=\{ x\in S_X : \vp(x)=1\}.
\]

\begin{lem}\label{lem:maxface}
Let $A$ be a $\mathrm{C}^*$-alge\-bra and $\vp\in \ext B_{A^*}$. 
Then, $|\vp|$ is a pure state and there is a unitary element $u$ 
in the unitization of $A$ such that $\vp(\,\cdot\,)=|\vp|(u^*\,\cdot\,)$ 
and $E_{\vp} =u E_{|\vp|}$.
\end{lem}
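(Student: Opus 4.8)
The plan is to base everything on the polar decomposition of $\vp$, regarded as a normal functional on the von Neumann algebra $A^{**}$. I would write $\vp(x)=|\vp|(w^*x)$, where $|\vp|\geq0$ is a normal functional and $w\in A^{**}$ is a partial isometry with $w^*w=p:=\supp(|\vp|)$; in the real case I would extract this decomposition from the complexification $(A_\IC,\cJ)$ exactly as in the proof of Lemma~\ref{lem:pure}. Since $\||\vp|\|=\|\vp\|=1$ (an extreme point of $B_{A^*}$ lies on the sphere), $|\vp|$ is a state. Both assertions of the lemma then reduce to extending $w$ to a unitary: once I produce a unitary $u$ in the unitization with $up=w$, I get $pu^*=(up)^*=w^*=(wp)^*=pw^*$, so that $|\vp|(u^*x)=|\vp|(pu^*xp)=|\vp|(pw^*xp)=|\vp|(w^*x)=\vp(x)$, using $\supp(|\vp|)=p$; and the face identity $E_\vp=uE_{|\vp|}$ is then immediate, since $\vp(ux)=|\vp|(u^*ux)=|\vp|(x)$ and $u$ is a surjective isometry of $S_A$.

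First I would show that $|\vp|$ is pure. Suppose $|\vp|=\tfrac12(\psi_1+\psi_2)$ with states $\psi_i$. From $\psi_i\le 2|\vp|$ one gets $\supp(\psi_i)\le p=w^*w$, and the map $\psi\mapsto\psi(w^*\,\cdot\,)$ is isometric on normal functionals supported under $p$, with inverse $\vp_i\mapsto\vp_i(w\,\cdot\,)$ (because $\vp_i(wz)=\psi_i(w^*wz)=\psi_i(pz)=\psi_i(z)$). Hence $\vp_i:=\psi_i(w^*\,\cdot\,)$ has norm one and $\vp=\tfrac12(\vp_1+\vp_2)$. Extremality of $\vp$ forces $\vp_1=\vp_2$, and applying the inverse gives $\psi_1=\psi_2$, so $|\vp|$ is pure. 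By Lemma~\ref{lem:pure}, $p=\supp(|\vp|)$ is a minimal projection, and $q:=ww^*$, being Murray--von Neumann equivalent to $p$, is minimal as well.

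It remains to extend the partial isometry $w$, with minimal initial and final projections $p,q$, to a unitary $u$ in the unitization of $A$ with $up=w$. Put $s:=p\vee q$, a finite rank projection. Since $w=qwp$, the projections $p,q$ must lie in the same central summand of $sA^{**}s$ (otherwise, for the central projection $z$ with $p\le z$ and $qz=0$ one would get $w=qwp=qzwp=0$), so I may assume $sA^{**}s\cong\IM_m(\IF)$ with $\IF\in\{\IR,\IC,\IH\}$ ($\IF=\IC$ throughout in the complex case) and $p,q$ rank-one, whence $s$ has rank at most $2$ over $\IF$. I would extend the isometry $w\colon\ran p\to\ran q$ to a unitary $\tilde w\in\cU(sA^{**}s)$ with $\tilde wp=w$, chosen to lie in $\pm\cU_0(sA^{**}s)$, and then lift it through Kadison's transitivity theorem (Lemma~\ref{lem:kadison}) to a unitary $u$ in the unitization with $us=\tilde w$, so that $up=w$. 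For $\IF\in\{\IC,\IH\}$ the unitary group of $\IM_m(\IF)$ is connected, so any extension already lies in $\cU_0$. The one delicate point is the real block, where $\cU(\IM_2(\IR))=O(2)$ is disconnected: here I would take $\tilde w$ to be a rotation in $SO(2)=\cU_0$ carrying the unit vector spanning $\ran p$ to its $w$-image (or $-p\in-\cU_0$ in the degenerate case $p=q$, $w=-p$). Navigating this real disconnectedness through the $\pm\cU_0$ formulation of Lemma~\ref{lem:kadison}, precisely the phenomenon flagged in the remark after that lemma, is the main technical obstacle; the remaining steps are the routine bookkeeping of polar decomposition and support projections.
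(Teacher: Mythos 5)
Your proposal is correct, and its overall skeleton is the same as the paper's: polar decomposition $\vp(\,\cdot\,)=|\vp|(v^*\,\cdot\,)$ in $A^{**}$, purity of $|\vp|$ (hence minimality of $p=v^*v$ and $q=vv^*$ via Lemma~\ref{lem:pure}), reduction of both conclusions to extending the partial isometry to a unitary $u$ with $up=v$, and Kadison's transitivity theorem with the $\pm\cU_0$ caveat. The differences are in the two verifications. For purity, the paper simply invokes Lemma~\ref{lem:arens}; your explicit transport of convex decompositions under $\psi\mapsto\psi(w^*\,\cdot\,)$, using $\psi_i\le 2|\vp|\Rightarrow\supp(\psi_i)\le p$, spells out what that citation compresses and is a welcome amplification (just note that the isometry claim uses the identification of $B_{A^*}$ with the normal part of $B_{(A^{**})^*}$). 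For the crucial connectivity step, the routes genuinely diverge: the paper constructs the specific orthogonal partial isometry $w:=\alpha^{-1/2}(p\vee q-q)v^*(p\vee q-p)$ and shows by a slick spectral argument (if both $v+w$ and $v-w$ had $\pm1$ in their spectra, writing $v+w=e-e^\perp$, $v-w=f-f^\perp$ would force $e=f$ or $e\perp f$, contradicting $p\ne q$) that one of $v\pm w$ lies in $\pm\cU_0((p\vee q)A^{**}(p\vee q))$; you instead classify the corner $sA^{**}s\cong\IM_m(\IF)$, $m\le 2$ (your central-summand observation in fact forces $z=s$, since $p,q\le z\le s=p\vee q$), and defuse the only disconnected case $\cU(\IM_2(\IR))=O(2)$ by taking the rotation $R\in SO(2)=\cU_0$ with $R\xi=w\xi$, which satisfies $Rp=w$ because $p$ is rank one. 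Your version is more concrete and makes the orientation obstruction visible, at the cost of a case analysis resting on the structure theorem for finite-dimensional real $\mathrm{C}^*$-alge\-bras (which the paper records anyway in the remark after Lemma~\ref{lem:kadison}); the paper's version is coordinate-free and uniform across $\IR,\IC,\IH$. Both are complete proofs.
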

\begin{proof}
Every $\vp\in S_{A^*}$ has a polar decomposition 
$\vp(\,\cdot\,)=|\vp|(v^*\,\cdot\,)$ (see Section III.4 in \cite{takesaki}), 
where $v$ is a partial isometry in $A^{**}$. 
By Lemma~\ref{lem:arens}, 
one sees that the state $|\vp|$ is pure if (and only if) $\vp\in\ext B_{A^*}$. 
Hence $p:=v^*v$ and $q:=vv^*$ are minimal projections in $A^{**}$. 
By Lemma~\ref{lem:arens}, it suffices to show that there is a unitary 
element $u$ in (the unitization of) $A$ such that $up=v$. 

If $p=q$, then $v$ is a unitary element in $pA^{**}p\cong\IR,\IC,\IH$ 
and so there is a unitary element $u$ such that $up=v$ by Kadison's 
transitivity theorem (Lemma~\ref{lem:kadison}). 
From now on, we assume that $p\neq q$. 
It follows that $p\vee q - p \sim q - p\wedge q = q$ 
(Proposition V.1.6 in \cite{takesaki}) is a minimal projection 
and $(p\vee q - p)v(p\vee q - q)v^*(p\vee q - p) = \alpha(p\vee q - p)$ 
for some $\alpha>0$. 
Hence $w:=\alpha^{-1/2}(p\vee q - q)v^*(p\vee q - p)$ is a minimal partial 
isometry in $(p\vee q)A^{**}(p\vee q)$ such that $w\perp v$.
We claim that at least one of the two unitary elements $v \pm w$ 
in $(p\vee q)A^{**}(p\vee q)$ does not have both $\pm1$ in its spectrum. 
If this was not the case, then $v+w=e-e^\perp$ and $v-w=f-f^\perp$ for some 
minimal projections $e$ and $f$, but since $v=e+f-1$ is a partial isometry, 
this implies $e=f$ or $e\perp f$, which is absurd. 
Hence, by Kadison's transitivity theorem (see Proof of Lemma~\ref{lem:kadison}), 
there is a unitary element $u$ in (the unitization of) $A$ such that $up=v$. 
\end{proof}

\begin{lem}\label{lem:realvna}
Let $A$ be a von Neumann algebra 
and $E\subset S_A$ be a maximal face.
Then, $E$ coincides with the closed convex hull of unitary elements in $E$. 
Moreover, there is a net $(\Theta_n)_n$ of 
affine contractions from $E$ onto ultra\-weakly-closed face $E_n\subset E$ 
such that $\Theta_n\to\id_E$ in the point-norm topology 
and each $E_n$ is affinely isometrically isomorphic 
to the closed unit ball of a real von Neumann algebra. 
In particular, any surjective isometry $T\colon S_A\to S_Y$ is affine on $E$.
\end{lem}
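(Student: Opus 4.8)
The plan is to reduce to a pure state via Lemma~\ref{lem:maxface} and then to approximate the maximal face $E$ from inside by sub-faces that are honest closed unit balls of corners of $A$, on which Corollary~\ref{cor:man} applies. First I would record that, by the remark preceding the lemma, $E=E_\vp$ for some $\vp\in\ext B_{A^*}$, and by Lemma~\ref{lem:maxface} one may write $E_\vp=uE_{|\vp|}$ with $u$ a unitary and $|\vp|$ a pure state. Since left multiplication by $u$ is an affine isometric bijection of $S_A$ carrying unitaries to unitaries and (ultraweakly closed) faces to faces, it suffices to treat the case that $\vp$ is itself a pure state; everything then transports back by $u$. Keeping the notation of Lemma~\ref{lem:pure}, I set $p=\supp(\vp)$, $L=\{a\in A:\vp(a^*a)=0\}$, and let $(e_n)_n$ be an increasing approximate unit for $L\cap L^*$ with $e_n\to p^\perp$ ultrastrongly.

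Next I would put $q_n:=\supp(e_n)$; this is where the hypothesis that $A$ is a \emph{von Neumann} algebra is used, since the range projection $q_n$ then again lies in $A$. Because $e_n\in L$ one has $\vp(q_n)=0$, i.e.\ $q_n\le p^\perp$, so the norm-closed face
\[
E_n:=\{x\in S_A:(1-q_n)x=1-q_n=x(1-q_n)\}=\{(1-q_n)+y:y\in B_{q_nAq_n}\}
\]
is contained in $E$ and is, via $x\mapsto q_nxq_n$, affinely isometrically isomorphic to the closed unit ball of the real von Neumann algebra $q_nAq_n$. As an affine translate of an ultraweakly compact ball it is ultraweakly closed. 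I would then define the affine contraction $\Theta_n\colon E\to E_n$ by $\Theta_n(x)=(1-q_n)+q_nxq_n$, which maps $E$ onto $E_n$ and is the identity on $E_n$.

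The crux is the point-norm convergence $\Theta_n\to\id_E$, and this is where the singular case ($p\notin A$, so one cannot simply compress by $p$) must be handled. Fix $x\in E$ and put $b:=1-x$. A Cauchy--Schwarz computation with $\vp$, using $\vp(x)=1=\|x\|$, gives $\vp(b^*b)=\vp(x^*x)-1=0$ and likewise $\vp(bb^*)=0$, so $b\in L\cap L^*$; hence $\|(1-e_n)b\|\to0$ and $\|b(1-e_n)\|\to0$. Since $e_n\le q_n$ we have $(1-q_n)e_n=0$, whence $\|(1-q_n)b\|\le\|(1-e_n)b\|\to0$ and symmetrically $\|b(1-q_n)\|\to0$. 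As $\Theta_n(x)-x=b-q_nbq_n=(1-q_n)b+q_nb(1-q_n)$, it follows that $\|\Theta_n(x)-x\|\to0$. I expect this to be the only step that genuinely exploits the hereditary-subalgebra structure, the range projections $q_n$ serving as the in-algebra substitute for $p^\perp$.

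Finally I would read off the two conclusions. For the first, the real Russo--Dye theorem underlying Corollary~\ref{cor:man} gives $B_{q_nAq_n}=\overline{\conv}\,\cU(q_nAq_n)$, and each unitary $v\in q_nAq_n$ produces a unitary $(1-q_n)+v$ of $A$ lying in $E_n\subset E$; thus $E_n\subset\overline{\conv}(\cU(A)\cap E)$, and $x=\lim_n\Theta_n(x)$ then shows $E=\overline{\conv}(\cU(A)\cap E)$. For the last assertion, each $E_n$ is a norm-closed, hence intersection, face (cf.\ the remark after Lemma~\ref{lem:intersection}), so by Lemma~\ref{lem:intersection} $T(E_n)$ is an intersection face and $T|_{E_n}$ is a surjective isometry from $E_n\cong B_{q_nAq_n}$ onto a convex set, whence affine by Corollary~\ref{cor:man}. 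Applying this to the midpoint of $\Theta_n(x),\Theta_n(y)\in E_n$ for $x,y\in E$ and letting $n\to\infty$ (using $\Theta_n\to\id$ and the continuity of $T$) yields $T(\tfrac{x+y}{2})=\tfrac12(T(x)+T(y))$, so $T$ is affine on $E$.
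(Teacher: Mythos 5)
Your architecture is the paper's: reduce to a pure state $\vp$ via Lemma~\ref{lem:maxface}, build projections $q_n\in A$ from an approximate unit of the hereditary subalgebra $L\cap L^*$ (this is indeed where the von Neumann hypothesis enters), set $\Theta_n(x)=(1-q_n)+q_nxq_n$, and finish with the real Russo--Dye theorem together with Lemma~\ref{lem:intersection} and Corollary~\ref{cor:man} plus a midpoint-and-limit argument. However, there is a genuine gap at the one step where you deviate from the paper: you take $q_n:=\supp(e_n)$ and claim that $e_n\in L$ forces $\vp(q_n)=0$, i.e.\ $q_n\le p^\perp$. This implication is false. Positivity only gives $e_n\le\|e_n\|q_n$, which is the wrong-way inequality, and for \emph{singular} pure states the support projection genuinely jumps up: take $A=\ell^\infty$ and $\vp=\lim_\omega$ along a free ultrafilter $\omega$, so that $L\cap L^*=\{f:\lim_\omega f=0\}$; the element $e=(1/k)_{k\in\IN}$ lies in $L\cap L^*$, yet $\supp(e)=1$ and $\vp(\supp(e))=1$. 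Nothing in your construction prevents the $e_n$ from being of this kind, and then $E_n\subset E$ fails (with $q_n=1$ your $E_n$ is all of $S_A$): the inclusion $E_n=\{(1-q_n)+y:y\in B_{q_nAq_n}\}\subset E_\vp$ requires exactly $\vp(q_n)=0$, so that $\vp(q_n^\perp+y)=1$ via Cauchy--Schwarz, and this inclusion is what your three conclusions ($\Theta_n(x)\in E$, the unitaries of $E_n$ lying in $E$, and $E=\overline{\conv}(\cU(A)\cap E)$) all hinge on. Note that the singular case is the entire content of the lemma: if $\vp$ were normal, then $p\in A$ would be a minimal projection, $E=p+B_{p^\perp Ap^\perp}$ would already be a translated unit ball, and there would be nothing to approximate. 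So your remark that the singular case ``must be handled'' is right, but the device you chose for it is precisely what breaks there.

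The repair is the paper's cutoff trick: after enlarging the index set, choose $\ve(n)>0$ with $\ve(n)\to0$ and set $q_n:=\chi_{(\ve(n),1]}(e_n)\in A$. Then $q_n\le\ve(n)^{-1}e_n$, and since $L\cap L^*$ is hereditary this yields $\vp(q_n)\le\ve(n)^{-1}\vp(e_n)=0$, hence $q_n\in L\cap L^*$ and $E_n=q_n^\perp+B_{q_nAq_n}\subset E_\vp$ as needed. Your convergence estimate must then also be adjusted, since $e_n\le q_n$ no longer holds: instead use $\|(1-q_n)e_n\|\le\ve(n)$ to get $\|(1-q_n)b\|\le\|b-e_nb\|+\ve(n)\|b\|\to0$ for $b=1-x\in L\cap L^*$ (your Cauchy--Schwarz verification that $b\in L\cap L^*$ is fine), and symmetrically on the right. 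With these two changes your argument coincides with the paper's proof; everything downstream --- the affine isomorphism $E_n\cong B_{q_nAq_n}$, Russo--Dye for the real von Neumann algebra $q_nAq_n$, affinity of $T|_{E_n}$ from Lemma~\ref{lem:intersection} and Corollary~\ref{cor:man}, and the passage from $T|_{E_n}$ affine to $T|_E$ affine via $\Theta_n\to\id_E$ --- is correct as you wrote it.
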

\begin{proof}
By Lemma~\ref{lem:maxface}, we may assume that $E=E_\vp$ for some 
pure state $\vp$ on $A$. 
Let $L:=\{ a\in A : \vp(a^*a)=0\}$ and take an approximate unit 
$(e_n)_n$ for $L\cap L^*$. 
By enlarging the index set if necessary, we may find $\ve(n)>0$ 
such that $\ve(n)\to0$ and put $q_n := \chi_{(\ve(n),1]}(e_n)$. 
Then, $(q_n)_n$ is again an approximate unit for the hereditary 
$\mathrm{C}^*$-sub\-alge\-bra $L\cap L^*$ and so any 
$x\in E$ satisfies $1-x\in L\cap L^*$ and $q_n(1-x)\approx 1-x \approx (1-x)q_n$. 
This implies $(1-q_n)x \approx 1-q_n \approx x(1-q_n)$ 
and $q_n x\approx xq_n$. 
Therefore $\Theta_n(x) := q_n^\perp + q_n x q_n \in q_n^\perp+B_{q_nAq_n}$ 
satisfies $\Theta_n(x)\to x$ in norm. 
We put $E_n:= q_n^\perp+B_{q_nAq_n} \subset E_\vp$. 
Then, by the Russo--Dye theorem (Theorem 7.2.4 in \cite{li}) $E_n$ 
coincides with the closed convex hull of unitary elements in $E_n$. 
Also, $T|_{E_n}$ is affine by
Lemma~\ref{lem:intersection} and Corollary~\ref{cor:man}, and so is $T|_E$.
\end{proof}

This is enough for the proof of Theorem~\ref{thm:mau} for 
von Neumann algebras. More technical results below are needed 
to deal with $\mathrm{C}^*$-alge\-bras. 

Let $A$ be a $\mathrm{C}^*$-alge\-bra and $A^{**}$ be its second dual. 
A projection $p$ in $A^{**}$ is said to be \emph{compact} if it is the ultrastrong 
limit of an decreasing net of norm-one positive elements in $A$. 
(See \cite{ap} for more detail.)
For any nonzero compact projection $p\in A^{**}$, 
denote the corresponding face by 
\[
F(p) := \{ x\in S_A : xp = p = px \} = A\cap \{ p+y : y\in B_{p^\perp A^{**}p^\perp}\}
\]
and put for $\lambda\in[-1,1]$, 
\begin{align*}
F(p,\lambda) := \{ x\in S_A : xp=\lambda p = px \} 
 = S_A\cap\{ \lambda p+ y : y\in B_{p^\perp A^{**}p^\perp} \}.
\end{align*}

\begin{lem}\label{lem:fwc}
One has $\overline{F(p)}^{\sigma(A^{**},A^*)}=\{ p+y : y\in B_{p^\perp A^{**}p^\perp}\}$. 
\end{lem}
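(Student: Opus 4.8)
The plan is to prove the two inclusions separately, the nontrivial one being that the weak$^*$-compact convex set $C_p:=\{p+y:y\in B_{p^\perp A^{**}p^\perp}\}$ is contained in $\overline{F(p)}^{\sigma(A^{**},A^*)}$. The inclusion $\overline{F(p)}^{\sigma(A^{**},A^*)}\subset C_p$ is immediate: one has $F(p)\subset C_p$ by definition, and $C_p$ is weak$^*$-closed because it is the image of the weak$^*$-compact unit ball $B_{p^\perp A^{**}p^\perp}$ under the weak$^*$-continuous translation $y\mapsto p+y$.

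For the reverse inclusion I would first extract, from the compactness of $p$, a decreasing net $(c_\alpha)_\alpha$ of norm-one positive elements of $A$ with $c_\alpha\to p$ ultrastrongly. Being a decreasing net with infimum $p$ forces $p\le c_\alpha\le1$, and from $p\le c_\alpha\le1$ one deduces $c_\alpha p=p=pc_\alpha$ (if $\xi$ lies in the range of $p$ then $\ip{(1-c_\alpha)\xi,\xi}=0$ with $1-c_\alpha\ge0$, so $c_\alpha\xi=\xi$). Setting $b_\alpha:=(1-c_\alpha)^{1/2}\in A$, the relation $(1-c_\alpha)p=0$ gives $b_\alpha p=0=pb_\alpha$, while $c_\alpha\to p$ ultrastrongly gives $b_\alpha\to p^\perp$ ultrastrongly by continuity of the functional calculus.

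Now fix $z=p+y\in C_p$. By Goldstine's theorem choose a net $(a_i)_i\subset B_A$ with $a_i\to y$ in the weak$^*$ topology, and put
\[
x_{\alpha,i}:=c_\alpha+b_\alpha a_i b_\alpha\in A.
\]
The identity $c_\alpha^{1/2}c_\alpha^{1/2}+b_\alpha b_\alpha=1$ lets me write $x_{\alpha,i}=R_\alpha^*\bigl[\begin{smallmatrix}1&0\\0&a_i\end{smallmatrix}\bigr]R_\alpha$ with the isometric column $R_\alpha=\bigl[\begin{smallmatrix}c_\alpha^{1/2}\\ b_\alpha\end{smallmatrix}\bigr]$, whence $\|x_{\alpha,i}\|\le1$; together with $x_{\alpha,i}p=c_\alpha p+b_\alpha a_i(b_\alpha p)=p$ and $px_{\alpha,i}=p$ this shows $x_{\alpha,i}\in F(p)$. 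Letting $i\to\infty$, weak$^*$-continuity of $w\mapsto b_\alpha w b_\alpha$ gives $x_{\alpha,i}\to c_\alpha+b_\alpha y b_\alpha\in\overline{F(p)}^{\sigma(A^{**},A^*)}$. Finally, letting $\alpha\to\infty$ and using $b_\alpha\to p^\perp$ together with joint ultrastrong continuity of multiplication on bounded sets, one gets $c_\alpha+b_\alpha y b_\alpha\to p+p^\perp y p^\perp=p+y=z$; since $\overline{F(p)}^{\sigma(A^{**},A^*)}$ is weak$^*$-closed, $z$ lies in it.

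The main obstacle is precisely the middle step: producing genuine elements of $F(p)$, that is, norm-one elements of $A$ fixing $p$ on both sides, that approximate an arbitrary $p+y$ whose $y$-part lives only in the bidual corner $p^\perp A^{**}p^\perp$. The device that resolves it is the two-sided compression $c_\alpha+b_\alpha(\,\cdot\,)b_\alpha$, whose defining relations $c_\alpha p=p$ and $b_\alpha p=0$ force membership in $F(p)$, and whose contractivity is verified by the column-isometry trick; Goldstine's theorem is then only needed to replace the bidual element $y$ by elements of $A$.
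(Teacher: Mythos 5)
Your proof is correct and follows essentially the same route as the paper's: both use the decreasing net $(c_\alpha)$ witnessing compactness of $p$ to form the elements $c_\alpha+(1-c_\alpha)^{1/2}a\,(1-c_\alpha)^{1/2}\in F(p)$ and verify contractivity by the identical column-isometry trick, the only cosmetic differences being that the paper approximates $y$ ultrastrongly via Kaplansky's density theorem and takes a single limit over a common index set, whereas you use Goldstine's theorem and an iterated weak$^*$/ultrastrong limit. One trivial slip: when $A$ is non-unital, $b_\alpha=(1-c_\alpha)^{1/2}$ lies in the unitization $\tilde{A}$ rather than in $A$, but this is harmless since $c_\alpha+b_\alpha a_i b_\alpha\in A$ anyway.
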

\begin{proof}
The inclusion $\subset$ is clear. 
For the other inclusion, take $y\in B_{p^\perp A^{**}p^\perp}$ arbitrary.
By Kaplansky's density theorem, there is a net $y_n\in B_A$ such that $y_n\to y$ ultrastrongly.
Since $p$ is compact, there is a net $p_n\in A$ such that $0\le p_n\le 1$ 
and $p_n\searrow p$. 
(We may assume that these nets are indexed by the same directed set.)
Then, $p_np=p=pp_n$ and 
hence the net 
\[
z_n:=p_n+(1-p_n)^{1/2}y_n(1-p_n)^{1/2} 
 =[\begin{smallmatrix} p_n^{1/2} & (1-p_n)^{1/2} \end{smallmatrix}]
 [\begin{smallmatrix} 1 & 0 \\ 0 & y_n\end{smallmatrix}]
 [\begin{smallmatrix} p_n^{1/2} \\ (1-p_n)^{1/2}\end{smallmatrix}]
\]
belongs to $F(p)$ (the latter expression shows $\|z_n\|\le1$). 
Since it converges ultrastrongly to $p+y$, we are done.
\end{proof}

\begin{lem}\label{lem:fvlambda}
One has $F(p)_\lambda=F(p,\lambda)$.
\end{lem}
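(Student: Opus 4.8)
The plan is to prove the two inclusions $F(p,\lambda)\subset F(p)_\lambda$ and $F(p)_\lambda\subset F(p,\lambda)$ separately; the second is routine, while the first requires a genuine norm approximation.

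For $F(p)_\lambda\subset F(p,\lambda)$ I would first turn the distance conditions into norm inequalities on $xp$ and $px$. Every $z\in F(p)$ satisfies $zp=p=pz$, so writing $xp-p=(x-z)p$ and $xp+p=(x+z')p$ for suitable $z,z'\in F(p)$ and using $\dist(x,F(p))\le 1-\lambda$, $\dist(x,-F(p))\le 1+\lambda$, one obtains $\|xp-p\|\le 1-\lambda$ and $\|xp+p\|\le 1+\lambda$, together with the mirror bounds for $px$. The key point is then the elementary claim: if $b\in A^{**}$ satisfies $b=bp$, $\|b-p\|\le 1-\lambda$, and $\|b+p\|\le 1+\lambda$, then $b=\lambda p$. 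I would prove this in two steps. First, compress to the corner: for every state $\omega$ of $pA^{**}p$ the value $\mu=\omega(pbp)$ obeys $|\mu-1|\le 1-\lambda$ and $|\mu+1|\le 1+\lambda$, and a one-line scalar computation forces $\mu=\lambda$; as states separate points, $pbp=\lambda p$. Second, since $b=bp$ the element $b-p$ is a ``column'', so $\|b-p\|^2=\|(pbp-p)^*(pbp-p)+(p^\perp bp)^*(p^\perp bp)\|=(1-\lambda)^2+\|p^\perp bp\|^2$, the last equality holding because $(1-\lambda)^2p$ is a scalar multiple of the unit of $pA^{**}p$; comparing with $(1-\lambda)^2$ yields $p^\perp bp=0$, hence $b=\lambda p$. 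Applying this to $b=xp$ and to $b=px$ gives $xp=\lambda p=px$, that is, $x\in F(p,\lambda)$.

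For the reverse inclusion, write $x=\lambda p+y$ with $y\in B_{p^\perp A^{**}p^\perp}$ and fix a decreasing net $p_n\searrow p$ in $A$ with $0\le p_n\le 1$ and $p_np=p=pp_n$, as in Lemma~\ref{lem:fwc}. Put $c_n=(1-p_n)^{1/2}$ and $z_n:=p_n+c_nxc_n$; the same $2\times 2$-matrix factorization as in Lemma~\ref{lem:fwc} shows $\|z_n\|\le 1$, and $c_np=0$ gives $z_np=p=pz_n$, so $z_n\in F(p)$. Since $c_n\in p^\perp A^{**}p^\perp$, the difference decomposes as $x-z_n=(\lambda-1)p+\beta_n$ with $\beta_n\in p^\perp A^{**}p^\perp$, whence $\|x-z_n\|=\max\{1-\lambda,\|\beta_n\|\}$. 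Writing $d_n=p^\perp-c_n$ and $r_n=p_n-p$ (so $0\le d_n\le r_n$, $\|r_n\|\le 1$, and $d_n+c_nd_n=r_n$), a short manipulation gives $\beta_n=d_n(x-\lambda)+c_n(x-\lambda)d_n+(\lambda-1)r_n$, so that $\|\beta_n\|\le\|d_n(x-\lambda)\|+\|(x-\lambda)d_n\|+(1-\lambda)$. The mirror element $-p_n+c_nxc_n\in -F(p)$ treats $\dist(x,-F(p))\le 1+\lambda$ in exactly the same way.

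The main obstacle is to show $\|d_n(x-\lambda)\|\to 0$ and $\|(x-\lambda)d_n\|\to 0$. Via the polar structure these equal $\bigl\||(x-\lambda)^*|\,d_n\bigr\|$ and $\bigl\||x-\lambda|\,d_n\bigr\|$, so everything reduces to proving $\|m\,d_n\|\to 0$ for the positive elements $m=|x-\lambda|$ and $m=|(x-\lambda)^*|$, both of which satisfy $mp=pm=0$ because $xp=\lambda p=px$. Since $0\le d_n\le r_n$ one has $\|md_n\|^2\le\|md_nm\|\le\|mr_nm\|=\|mp_nm\|$, so it suffices to show $\|mp_nm\|\to 0$. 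This is precisely where compactness of $p$ (rather than mere closedness) is indispensable: $p_n\to p$ only ultrastrongly and $mp_nm$ merely decreases ultrastrongly to $0$, which does not by itself force norm convergence; compactness of $p$ prevents spectral mass of $m$ from escaping near $p$, exactly as a Dini-type argument works over a compact subset in the commutative model $A=C(X)$. I expect establishing this localization from the non-commutative topology of compact projections (cf.\ \cite{ap}) to be the technical heart, after which $\limsup_n\|x-z_n\|\le 1-\lambda$, and hence $\dist(x,F(p))\le 1-\lambda$, follows at once.
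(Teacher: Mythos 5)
Your first inclusion, $F(p)_\lambda\subset F(p,\lambda)$, is correct and genuinely different from the paper's argument: the paper represents $A^{**}\subset\IB(\cH)$, picks a unit vector $\xi\in p\cH$, and uses near-equality in the triangle inequality for $\|\xi-x\xi\|+\|\xi+x\xi\|$ to force $x\xi=\lambda\xi$, whereas you work algebraically, pinning down the corner $pbp=\lambda p$ by a two-disk numerical-range computation and then killing $p^\perp bp$ by the column identity $\|b-p\|^2=(1-\lambda)^2+\|p^\perp bp\|^2$. Both are fine; your version avoids choosing a representation. One caveat: the lemma is used for real $\mathrm{C}^*$-alge\-bras as well, and there states of $pA^{**}p$ need \emph{not} separate points (the paper notes that $\IR,\IC,\IH$ each carry a unique state), so in the real case you should pass to the complexification $(pA^{**}p)_\IC$ before invoking ``states separate points''; the norm estimates $\|pbp\mp p\|\le 1\mp\lambda$ survive this, so the fix is routine.

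The second inclusion has a genuine gap, and it is worse than the ``technical heart'' you defer: the claim $\|mp_nm\|\to0$ is \emph{false} for an arbitrary admissible net $p_n\searrow p$, so no Dini-type argument from compactness of $p$ can close it in the setting you fixed. Take $A=\IB(\cH)$ with orthonormal basis $(e_k)_k$, let $p$ be the projection onto $\IC e_1$ (compact, since $p\in A$), and let $p_n:=p+q_n$ where $q_n$ is the projection onto $\overline{\lh}\{e_k:k\ge n\}$; then $0\le p_n\le 1$, $p_np=p=pp_n$, and $p_n\searrow p$ ultrastrongly, so this net is legitimate. For $x=\lambda p+y\in F(p,\lambda)$ with $y$ the projection onto $\IC e_2$ and $\lambda\in(0,1)$, one has $m(p_n-p)=\lambda(p_n-p)$ for $n\ge3$, whence $\|mp_nm\|\ge\lambda^2$; worse, your $z_n=p_n+c_nxc_n$ equals $p_n+y$, so $x-z_n=(\lambda-1)p-(p_n-p)$ and $\|x-z_n\|=1$ for all $n\ge 3$, even though $\dist(x,F(p))=1-\lambda$ (witnessed by $p+y\in F(p)$). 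So your approximants simply do not realize the distance, and compactness of $p$ cannot prevent this, since $p$ here is as compact as possible.

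The repair is the paper's soft duality step, and you are one move away from it: your $z_n$ \emph{do} converge ultrastrongly to $p+x'$ (this is exactly the content of Lemma~\ref{lem:fwc}, that $\overline{F(p)}^{\sigma(A^{**},A^*)}=\{p+y:y\in B_{p^\perp A^{**}p^\perp}\}$), and then, since $F(p)$ is convex and $x\in A$, the Hahn--Banach separation theorem gives $\dist(x,F(p))=\dist\bigl(x,\overline{F(p)}^{\sigma(A^{**},A^*)}\bigr)\le\|x-(p+x')\|=1-\lambda$. In other words: replace the norm-limit computation, which cannot work, by weak$^*$ approximation plus separation, which requires no norm convergence of any approximating net at all.
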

\begin{proof}
We view $A^{**}\subset\IB(\cH)$. 
Let $x\in F(p)_\lambda$ be given 
and take a unit vector $\xi\in p\cH$.
For every $\ve>0$, there are $y\in F(p)$ and $z\in -F(p)$ 
such that $\| y-x \|\approx_\ve1-\lambda$ and $\|z-x\|\approx_\ve1+\lambda$.
Since one has $y\xi=\xi=-z\xi$ and 
\[
2\le\| \xi-x\xi\|+\|\xi+x\xi\|\le \|y-x\|+\|-z+x\|\approx_{2\ve} 2,
\]
$\xi-x\xi$ and $\xi+x\xi$ are almost parallel. Since $\ve>0$ was arbitrary, 
this means $x\xi$ is parallel to $\xi$ and $x\xi=\lambda \xi$, 
i.e., $xp=\lambda p$. The proof of $px=\lambda p$ is similar. 
This proves  $F(p)_\lambda\subset F(p,\lambda)$. 
For the converse inclusion, let $x=\lambda p+x'\in F(p,\lambda)$ be given. 
Then, $y := p+x'\in \overline{F(p)}^{\sigma(A^{**},A^*)}$ by Lemma~\ref{lem:fwc} and 
$\|x-y\|=1-\lambda$. 
By the Hahn--Banach separation theorem, one has 
\[
\dist(x,F(p))=\dist(x,\overline{F(p)}^{\sigma(A^{**},A^*)})\le 1-\lambda.
\]
The proof of the other inequality is similar and  $F(p)_\lambda\supset F(p,\lambda)$.
\end{proof}
\pagebreak[2]

The following is formally stronger than the Russo--Dye theorem 
(Theorem I.8.4 in \cite{davidson}), but it can be proved by adapting 
the standard proof.  
\begin{lem}\label{lem:rd}
Let $A$ be a unital complex $\mathrm{C}^*$-alge\-bra and 
$E\subset S_A$ be a maximal face. 
Then, $E$ coincides with the closed convex hull of unitary elements in $E$. 
\end{lem}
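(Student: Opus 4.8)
The plan is to reduce to the case of a pure state and then run the standard Russo--Dye argument inside the corner cut out by the support projection, descending back to $A$ by Kaplansky's density theorem. By Lemma~\ref{lem:maxface} we may write the maximal face as $E=E_\vp$ with $\vp\in\ext B_{A^*}$, and $E=uE_{|\vp|}$ for a unitary $u$ and the pure state $|\vp|$. Since $x\mapsto ux$ is an affine isometry of $A$ carrying unitaries to unitaries and $E_{|\vp|}$ onto $E$, it suffices to treat a pure state $\vp$. Put $p:=\supp(\vp)\in A^{**}$, which by Lemma~\ref{lem:pure} is minimal, hence of rank one in the complex case; as $A$ is unital, $p$ is compact and $1\in E_\vp=\{x\in S_A:xp=p=px\}$. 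Realizing $\vp=\ip{\,\cdot\,\xi,\xi}$ in the GNS representation, $E_\vp$ consists of those $x\in S_A$ with $x\xi=\xi$ (equivalently $x^*\xi=\xi$), and the unitaries in $E_\vp$ are precisely the unitaries of $A$ fixing $\xi$.

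Next I would exploit the block decomposition $x=p+x_0$ with $x_0:=p^\perp x p^\perp\in B_M$, where $M:=p^\perp A^{**}p^\perp$ is a von Neumann algebra with unit $p^\perp$. Writing $H:=\{a\in A:pa=ap=0\}$ for the hereditary $\mathrm{C}^*$-subalgebra with open support projection $p^\perp$, one has $M=H^{**}$. Applying the Russo--Dye theorem to the unital $\mathrm{C}^*$-algebra $M$, the corner $x_0$ lies in the norm-closed convex hull of $\cU(M)$, and since $M$ is a von Neumann algebra every $w\in\cU(M)$ has the form $w=\exp(\sqrt{-1}h_0)$ with $h_0=h_0^*\in M$. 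The corresponding unitary of $A^{**}$ fixing $\xi$ is then $p+w=\exp(\sqrt{-1}h_0)$, and $x=p+x_0$ lies in the norm-closed convex hull of $\{p+w:w\in\cU(M)\}$.

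The crucial descent step is to approximate each such $p+w$ by genuine unitaries of $A$ lying in $E_\vp$. By Kaplansky's density theorem there is a net $k_\alpha=k_\alpha^*\in H$ with $\|k_\alpha\|\le\|h_0\|$ and $k_\alpha\to h_0$ ultrastrongly. Then $\exp(\sqrt{-1}k_\alpha)-1\in H\subset A$, so $\exp(\sqrt{-1}k_\alpha)\in\cU(A)$; moreover $k_\alpha\xi=0$ gives $\exp(\sqrt{-1}k_\alpha)\xi=\xi$, so $\exp(\sqrt{-1}k_\alpha)\in\cU(A)\cap E_\vp$, and $\exp(\sqrt{-1}k_\alpha)\to\exp(\sqrt{-1}h_0)=p+w$ ultrastrongly. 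Hence every $p+w$, and therefore $x=p+x_0$ itself, belongs to the weak$^*$-closed convex hull in $A^{**}$ of $\cU(A)\cap E_\vp$. Since $\cU(A)\cap E_\vp\subset A$, the same half-space inequalities (Hahn--Banach) describe this weak$^*$-closed convex hull and the norm-closed convex hull of $\cU(A)\cap E_\vp$ in $A$; as $x\in A$, we conclude $x\in\overline{\conv}(\cU(A)\cap E_\vp)$, which is what we want.

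I expect the main obstacle to be exactly this descent from the bidual corner $M=H^{**}$ back to $A$ while remaining inside the face: every element of $E_\vp$ has norm one, so the finite ``mean of unitaries'' form of Russo--Dye cannot be applied to $x$ directly in $A$, and the unitaries supplied by Russo--Dye in $M$ live in $A^{**}$. Here the compactness of $p$ (equivalently the openness of $p^\perp$) is what guarantees $M=H^{**}$ for a hereditary subalgebra $H\subset A$, so that Kaplansky's theorem furnishes approximants $k_\alpha\in H$ annihilating $\xi$; combined with the Hahn--Banach argument, this is precisely what converts weak$^*$-approximation by unitaries of $A$ in $E_\vp$ into membership of $x$ in their norm-closed convex hull.
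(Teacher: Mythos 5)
Your proof is correct, but it takes a genuinely different route from the paper's. The paper proves Lemma~\ref{lem:rd} entirely inside $A$ by adapting the classical Russo--Dye iteration: after the same reduction via Lemma~\ref{lem:maxface} to $E=E_\vp$ with $\vp$ pure, it views elements as $2\times2$ operator matrices along $p\oplus p^\perp$, inductively takes the unitary polar part $w_k$ of the invertible element $(1+\delta)v_{k-1}+x$, and splits $w_k|v_{k-1}+x|$ into the two unitaries $w_k\bigl(|\tfrac{v_{k-1}+x}{2}|\pm\sqrt{-1}(1-|\tfrac{v_{k-1}+x}{2}|^2)^{1/2}\bigr)$, checking from the matrix picture that every unitary produced stays in $E\cap\cU(A)$; telescoping gives the quantitative conclusion that $x$ is within $3/n$ of a mean of $n$ unitaries of $E$. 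You instead pass to the bidual: Russo--Dye applied abstractly in the corner von Neumann algebra $M=p^\perp A^{**}p^\perp$, unitaries of $M$ written as exponentials $\exp(\sqrt{-1}h_0)$, Kaplansky density in the hereditary subalgebra $H=L\cap L^*$ to produce self-adjoint $k_\alpha\in H$ with $\exp(\sqrt{-1}k_\alpha)\in\cU(A)\cap E_\vp$ converging ultrastrongly to $p+w$, and a final Hahn--Banach step converting weak$^*$ approximation into norm approximation of $x$ by convex combinations. One point deserves explicit attribution: the identification of the ultraweak closure of $H$ with $M$ (your ``$M=H^{**}$'') is exactly the content $e_n\to p^\perp$ of Lemma~\ref{lem:pure}, so you are leaning on that lemma rather than on compactness of $p$ per se. Your scheme is in fact the one the paper itself deploys for Lemma~\ref{lem:adhoc} (and, in the von Neumann case, Lemma~\ref{lem:realvna}): bidual, exponentials, Kaplansky, Hahn--Banach. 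What each approach buys: the paper's argument is elementary, self-contained and quantitative, never leaving $A$; yours is softer and shorter given the surrounding infrastructure, and correctly isolates the descent from $A^{**}$ back into the face as the only genuine difficulty --- but it needs the machinery of open projections and Lemma~\ref{lem:pure}, which the paper's direct iteration avoids for this particular lemma.
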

\begin{proof}
By Lemma~\ref{lem:maxface}, we may assume that 
$E=E_\vp$ for some pure state $\vp$ on $A$. 
Let $p:=\supp(\vp)$ and view elements in $A$ 
as operator valued $2 \times 2$ matrix 
in accordance with $p\oplus p^\perp$. 
Thus, $E=\{[\begin{smallmatrix} 1 & 0 \\ 0 & \ast \end{smallmatrix}]\}$.
Let $x\in E$ and $\ve>0$ be given. 
We set $v_0:=1$ and choose $u_k,v_k\in E\cap\cU(A)$ 
inductively as follows.
Fix $\delta>0$ very small and let 
$(1+\delta)v_{k-1}+x=w_k|(1+\delta)v_{k-1}+x|$
be the polar decomposition. Note that $w_k\in E\cap\cU(A)$, as it is easily 
seen from the operator valued matrix viewpoint. 
We define $u_k,v_k \in E\cap\cU(A)$ to be 
$w_k(|\frac{v_{k-1}+x}{2}|\pm\sqrt{-1}(1-|\frac{v_{k-1}+x}{2}|^2)^{1/2})$.
Then, 
\[
u_k+v_k=w_k|v_{k-1}+x|\approx_{\delta'}w_k|(1+\delta)v_{k-1}+x|
=(1+\delta)v_{k-1}+x\approx_{\delta}v_{k-1}+x,
\]
where $\delta'>0$ depends only on $\delta$ and converges to $0$ as 
$\delta$ converges to $0$. 
Thus, by choosing $\delta>0$ small enough, 
one has $v_{k-1}+x\approx_{\ve/3}u_k+v_k$. 
It follows that 
\[
v_0+nx \approx_{\ve/3} u_1+v_1+(n-1)x \approx_{\ve/3}\cdots\approx_{\ve/3} u_1+\cdots+u_n+v_n
\]
and $\| x-\frac{1}{n}\sum_{k=1}^n u_k \|<\ve$ for $n\geq3/\ve$. 
\end{proof}
\pagebreak[2]

We will need the following ad hoc result.
\begin{lem}\label{lem:adhoc}
Let $A$ be a unital complex $\mathrm{C}^*$-alge\-bra and 
$\psi\colon A\to\IC$ be a nonzero multiplicative linear functional. 
Then the closed unit ball of the real $\mathrm{C}^*$-alge\-bra 
$A^\psi_{\IR} := \psi^{-1}(\IR)$ has the strong Mankiewicz property.
\end{lem}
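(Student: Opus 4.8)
The plan is to verify the hypothesis of Theorem~\ref{thm:man} for the real Banach space $X=A^\psi_{\IR}$; in fact I would show that $\overline{\conv}(\ext B_{A^\psi_{\IR}})$ equals all of $B_{A^\psi_{\IR}}$. First I would record the structure of $A^\psi_{\IR}$. Since $\psi$ is a character, it is unital, of norm one, and $*$-preserving, so $\ker\psi$ is a closed self-adjoint two-sided ideal of complex codimension one, $A=\ker\psi\oplus\IC 1$, and hence $A^\psi_{\IR}=\ker\psi\oplus\IR 1$ is a unital real $\mathrm{C}^*$-alge\-bra whose self-adjoint part is all of $A_{\mathrm{sa}}$. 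The key observation is that every unitary $u\in A$ with $\psi(u)\in\{\pm1\}$ lies in $A^\psi_{\IR}$ and, being an extreme point of $B_A$, is an extreme point of $B_{A^\psi_{\IR}}$; moreover, for any unitary $w\in A^\psi_{\IR}$ the map $x\mapsto xw$ is a surjective real-linear isometry of $A^\psi_{\IR}$, so $\ext B_{A^\psi_{\IR}}$ and $\overline{\conv}(\ext B_{A^\psi_{\IR}})$ are invariant under right multiplication by $w$.

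Next I would apply the real Russo--Dye theorem (Theorem 7.2.4 in \cite{li}) to $A^\psi_{\IR}$, obtaining
\[
B_{A^\psi_{\IR}}\subset\overline{\conv}\{\cos(h)e^{k}:h=h^*\in A^\psi_{\IR},\ k=-k^*\in A^\psi_{\IR}\}.
\]
For each such $k$ one has $\psi(e^{k})=e^{\psi(k)}=1$, so $e^{k}$ is a unitary of $A$ lying in $A^\psi_{\IR}$, hence an extreme point of $B_{A^\psi_{\IR}}$. By the invariance under right multiplication by $e^{k}$, it then suffices to show $\cos(h)\in\overline{\conv}(\ext B_{A^\psi_{\IR}})$ for every self-adjoint $h\in A^\psi_{\IR}$: together with the displayed inclusion this gives $B_{A^\psi_{\IR}}=\overline{\conv}(\ext B_{A^\psi_{\IR}})$, and Theorem~\ref{thm:man} concludes.

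The hard part is this last reduction, and here is where the only real subtlety lies. Fixing $h=h^*$, I would pass to the abelian unital $\mathrm{C}^*$-sub\-alge\-bra $B:=C^*(h,1)\cong C(K)$ with $K:=\sigma(h)\subset\IR$, on which $\psi$ restricts to evaluation at $t_0:=\psi(h)\in K$. The naive unitary dilation $\cos(h)=\tfrac12(e^{\sqrt{-1}h}+e^{-\sqrt{-1}h})$ is useless here, because $\psi(e^{\pm\sqrt{-1}h})=e^{\pm\sqrt{-1}t_0}$ need not be $\pm1$, so $e^{\pm\sqrt{-1}h}\notin A^\psi_{\IR}$; this value-at-the-character constraint is the main obstacle. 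Instead I would prove the stronger claim that every real-valued $g\in C(K)$ with $\|g\|\le1$ lies in the closed convex hull $C$ of $\mathcal V:=\{v\in C(K,\IT):v(t_0)\in\{\pm1\}\}$. This suffices, since $\cos(h)$ is such a $g$ and every $v\in\mathcal V$ is a unitary of $A$ with $\psi(v)=\pm1$, hence an extreme point of $B_{A^\psi_{\IR}}$.

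To establish the claim I would argue by duality. If some such $g$ were not in the closed convex set $C$, the Hahn--Banach separation theorem would give a continuous real-linear functional $\phi$ on $C(K)$ with $\phi(g)>\sup_{v\in\mathcal V}\phi(v)$, and by the Riesz representation theorem $\phi(f)=\Re\int_K f\,d\mu$ for a complex regular Borel measure $\mu$. Using the polar decomposition of $\mu$ and its regularity to approximate the optimal phase away from $t_0$ while honoring $v(t_0)=\pm1$, one finds $\sup_{v\in\mathcal V}\phi(v)=|\mu|(K\setminus\{t_0\})+|\Re\mu(\{t_0\})|$. On the other hand, since $g$ is real with $\|g\|\le1$ and $g(t_0)$ is real, splitting off the atom at $t_0$ gives $\phi(g)=\Re\int_K g\,d\mu\le|\mu|(K\setminus\{t_0\})+|\Re\mu(\{t_0\})|$, contradicting the strict separation. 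Hence $g\in C$, which closes the argument.
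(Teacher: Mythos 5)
Your proposal is correct, but it takes a genuinely different route from the paper's proof. The paper reduces, just as you do, to showing via Theorem~\ref{thm:man} that $B_{A^\psi_{\IR}}$ is the closed convex hull of its unitary elements, but it proves this by passing to the bidual: it extends $\psi$ ultraweakly, writes $A^{**}=\IC\oplus M$ with $M=\ker\psi$, applies the real von Neumann Russo--Dye theorem in the real von Neumann algebra $\IR\oplus M$ to approximate $x$ by averages of unitaries $u_k$, then pulls these back into $A$ by taking logarithms $h_k=\frac{1}{\sqrt{-1}}\log u_k$ with $\psi(h_k)\in\{0,-\pi\}$, approximating via Kaplansky's density theorem by self-adjoint $h_{k,i}\in A$ normalized so that $\psi(h_{k,i})\in\{0,-\pi\}$, exponentiating, and concluding with Hahn--Banach separation against the ultraweak closure. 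You instead apply the real Russo--Dye theorem directly in $A^\psi_{\IR}$ and isolate the one genuine obstruction --- that the naive dilation $\cos(h)=\frac12(e^{\sqrt{-1}h}+e^{-\sqrt{-1}h})$ violates the constraint at the character --- resolving it by a purely commutative duality argument in $C(K)$: every real contraction is a norm-limit of convex combinations of circle-valued functions pinned to $\pm1$ at $t_0$, proved by separating with a measure, splitting off the atom at $t_0$, and a Lusin-type phase approximation away from $t_0$ (the localized unimodular correction $e^{\sqrt{-1}\eta(t)c}$ near $t_0$ makes the pinning harmless). Your route avoids the bidual and Kaplansky density entirely and is more elementary and explicit; the paper's is shorter given standard von Neumann technology and requires no measure-theoretic computation. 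Two small points you should make explicit: first, $\psi(e^k)=1$ because $\psi(k)$ is simultaneously real (as $k\in A^\psi_{\IR}$) and purely imaginary (as $k^*=-k$ and $\psi$ is $*$-preserving), hence $\psi(k)=0$; second, to invoke Theorem 7.2.4 of \cite{li} one needs $A^\psi_{\IR}$ to be a genuine unital real $\mathrm{C}^*$-alge\-bra, which the paper secures by exhibiting the complexification $A^\psi=\{a\oplus\bar b\in A\oplus\bar A:\psi(a)=\overline{\psi(b)}\}$ with conjugation $\cJ(a\oplus\bar b)=b\oplus\bar a$ --- you take this from the statement, but a self-contained proof should include it.
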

\begin{proof}
Let $\bar{A}=\{\bar{a}:a\in A\}$ denote the complex conjugate $\mathrm{C}^*$-alge\-bra 
of $A$. Then, $A^\psi_{\IR}$ is the real part of the complex $\mathrm{C}^*$-alge\-bra 
$A^\psi := \{ a\oplus \bar{b} \in A\oplus \bar{A} : \psi(a)=\overline{\psi(b)}\}$ 
with respect to the conjugate linear automorphism 
$\cJ(a\oplus \bar{b})=b\oplus\bar{a}$. 
By Theorem~\ref{thm:man}, it suffices to show that $B_{A^\psi_{\IR}}$ 
coincides with the closed convex hull of unitary elements in $A^\psi_{\IR}$. 
Let $x\in B_{A^\psi_{\IR}}$ and $\ve>0$ be given. 
We consider the second dual von Neumann algebra $A^{**}$ and 
the ultraweakly continuous extension $\psi\colon A^{**}\to\IC$. 
Then, $M:=\ker\psi$ is a von Neumann subalgebra 
such that $A^{**}=\IC\oplus M$ as a von Neumann algebra. 
Hence there are unitary elements $u_k$ 
in $(A^{**})^\psi_{\IR}=\IR\oplus M$ 
such that $x\approx_\ve\frac{1}{n}\sum_{k=1}^n u_k$. 
Let $h_k:=\frac{1}{\sqrt{-1}}\log u_k \in (A^{**})_{\mathrm{s.a.}}$,
where $\log \exp(\sqrt{-1}\lambda)=\sqrt{-1}\lambda$ for $\lambda\in[-\pi,\pi)$.
Since $\psi(u_k)\in\{1,-1\}$, 
one has $\psi(h_k)=\frac{1}{\sqrt{-1}}\log \psi(u_k)\in\{0,-\pi\}$. 
By Kaplansky's density theorem, there are bounded nets 
$(h_{k,i})_i$ in $A_{\mathrm{s.a.}}$ such that $h_{k,i}\to h_k$ ultrastrongly. 
We may assume that $\psi(h_{k,i})\in\{0,-\pi\}$. 
It follows that $u_{k,i}:=\exp(\sqrt{-1}h_{k,i})$ are unitary elements 
in $A^\psi_{\IR}$ such that 
$\frac{1}{n}\sum_{k=1}^n u_{k,i} \to \frac{1}{n}\sum_{k=1}^n u_k$ ultrastrongly. 
Hence, by the Hahn--Banach separation theorem, one has 
\smallskip

\hspace*{\fill}
$\dist(x, \conv\{ u_{k,i} : k,i\})
 = \dist(x, \overline{\conv\{ u_{k,i} : k,i\}}^{\sigma(A^{**},A^*)})
 < \ve.$
\hspace*{\fill}
\end{proof}
\pagebreak[2]

The following is the main technical result. 
\begin{prop}\label{prop:tech}
Let $A$ be a $\mathrm{C}^*$-alge\-bra, 
$T\colon S_A\to S_Y$ be a surjective isometry, and 
$E\subset S_A$ be a maximal face. 
Then, there is a net $(\Theta_n)_n$ of 
affine contractions from $E$ into closed convex subsets $K_n\subset E$ 
such that $\Theta_n\to\id_E$ in the point-norm topology, each $K_n$ is affinely isometrically isomorphic 
to the closed unit ball of a real $\mathrm{C}^*$-alge\-bra $C_n$, and 
each $T(K_n)$ is convex. 
In the case $A$ is a unital complex $\mathrm{C}^*$-alge\-bra, 
the above $C_n$ can be taken so that $B_{C_n}$ has the strong Mankiewicz 
property and so $T|_E$ is affine. 
\end{prop}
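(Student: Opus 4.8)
The plan is to reduce to a face attached to a pure state, to exhaust that face from inside by sets that are genuine unit balls of $\mathrm{C}^*$-algebras, and to transport affineness back along the exhaustion. By Lemma~\ref{lem:maxface} I may assume $E=E_\vp=F(p)$ for a pure state $\vp$ with $p:=\supp(\vp)$; by Lemma~\ref{lem:pure}, $p$ is a minimal (compact) projection, and the approximate unit $(e_n)_n$ of $L\cap L^*$ satisfies $e_np=0$ and $e_n\to p^\perp$ ultrastrongly. For $x\in E$ one has $1-x\in L\cap L^*$, so (taking $A$ unital for the moment) the affine maps
\[
\Theta_n(x):=1-e_n^{1/2}(1-x)e_n^{1/2}
\]
are contractions of $E$ into itself: $\Theta_n(x)p=p=p\Theta_n(x)$, $\|\Theta_n(x)\|\le1$, and $\Theta_n(x)-x=(1-x)-e_n^{1/2}(1-x)e_n^{1/2}\to0$ in norm because $(e_n)_n$ is an approximate unit for $L\cap L^*$. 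These are the $\mathrm{C}^*$-analogue of the compressions $x\mapsto q_n^\perp+q_nxq_n$ from Lemma~\ref{lem:realvna}, the point being that $A$ need not contain the spectral projections $q_n$; for non-unital $A$ one replaces the unit by a norm-one positive $c_n\in A$ with $c_n\searrow p$, which exists since $p$ is compact.

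The heart of the matter is the construction of the balls $K_n$ and the algebras $C_n$. The aim is, for a compact projection $r_n\ge p$ produced by functional calculus from $e_n$, to cut $E$ down to a closed convex $K_n$ built from the face $F(r_n)$ and its level sets $F(r_n,\lambda)$ (whose weak$^*$-closures and mutual position are controlled by Lemmas~\ref{lem:fwc} and~\ref{lem:fvlambda}), in such a way that $K_n$ is affinely isometric to $B_{C_n}$ for a real $\mathrm{C}^*$-algebra $C_n$ and $\Theta_n(E)\subset K_n$. The obstruction is that $r_n$ and the relevant spectral projections need not lie in $A$, so $F(r_n)$ becomes a genuine ball only after the finite-rank corner $r_nA^{**}r_n$ is lifted to honest elements of $A$; this is exactly what Kadison's transitivity theorem (Lemma~\ref{lem:kadison}) is for, and reconciling ``$K_n$ is a $\mathrm{C}^*$-algebra unit ball'' with ``$K_n$ sits inside the curved face $E$ and exhausts it'' is the step I expect to be the main obstacle.

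It remains to verify the two structural properties and to harvest the conclusion. Convexity of $T(K_n)$ is where the theme of this section enters: since $K_n$ is assembled from a sub-face and its opposite, Lemma~\ref{lem:tfl} forces convex combinations of the images $T(F(r_n,\lambda))$ back into images of level sets, whence $T(K_n)$ is convex; in the case that $K_n$ is itself a closed face one may instead quote Lemma~\ref{lem:intersection} directly. For the unital complex case I would arrange $C_n$ to have the strong Mankiewicz property, taking it either as a unital complex (or real von Neumann) corner, covered by Corollary~\ref{cor:man}, or as one of the algebras $A^\psi_\IR=\psi^{-1}(\IR)$ supplied by Lemma~\ref{lem:adhoc}. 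Then each $T|_{K_n}\colon K_n\cong B_{C_n}\to T(K_n)$ is a surjective isometry onto a convex set and is therefore affine; since $\Theta_n\to\id_E$ in the point-norm topology and $T$ is affine on each $K_n\supset\Theta_n(E)$, the affineness of $T|_E$ follows by continuity. The crux throughout is the simultaneous demand that $K_n$ be a genuine $\mathrm{C}^*$-ball, lie inside $E$, exhaust $E$ under $\Theta_n$, and have convex image -- meeting all of these in the absence of projections in $A$ is the technical core of the argument.
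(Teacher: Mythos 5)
Your outer scaffolding does match the paper: the reduction to $E=E_\vp$ via Lemma~\ref{lem:maxface}, your maps $\Theta_n(x)=1-e_n^{1/2}(1-x)e_n^{1/2}=(1-e_n)+e_n^{1/2}xe_n^{1/2}$ are literally the paper's contractions, and the final harvesting (strong Mankiewicz property of $B_{C_n}$ applied to $T|_{K_n}$ onto the convex set $T(K_n)$, then point-norm limits along $\Theta_n\to\id_E$) is also as in the paper. But the two steps you yourself flag as ``the main obstacle'' and ``the technical core'' are exactly the content of the proposition, and your sketch of them does not work. First, the construction of $K_n$: the relevant projection is not a compact $r_n\geq p$ obtained from $e_n$, but the support projection $p=\chi_{(0,1]}(s)$ of $s:=1-f_n$ (where $f_n$ is a companion of $e_n$ with $e_nf_n=e_n=f_ne_n$), and this $p$ is an \emph{open} projection, generally not compact, with $pA^{**}p$ infinite-dimensional. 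Consequently Kadison's transitivity theorem (Lemma~\ref{lem:kadison}), which requires a finite rank projection, cannot lift the corner as you propose; and when $A$ has no suitable projections (e.g.\ projectionless $A$, which is the whole point of the case $p\notin A$), no face $F(r_n)$ can serve as $K_n$ at all. The paper instead introduces the subalgebra $C:=\{a\in A: ap=\gamma p=pa\}$, observes that $p\notin A$ forces $\|px\|\le\|p^\perp x\|$ on $C$, extracts the character $\psi$ on $p^\perp C$, and sets $K_n:=s+(1-s)B_{C^\psi_\IR}$; this is where the real $\mathrm{C}^*$-algebra of Lemma~\ref{lem:adhoc} enters, a mechanism entirely absent from your outline.

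Second, convexity of $T(K_n)$: your claim that $K_n$ is ``assembled from a sub-face and its opposite'' so that Lemma~\ref{lem:tfl} applies directly is false, because elements of $K_n$ act on the corner $pA^{**}p$ by the non-scalar operators $h_\gamma(s)=s+\gamma(1-s)$; hence the slices $K(\gamma)=\{x\in E_\vp: px=h_\gamma(s)=xp\}$ are not level sets $F(r,\lambda)$ of any single compact projection, and Lemma~\ref{lem:tfl} says nothing about them as such. The paper's resolution is a metric characterization of each slice: one approximates $h_\gamma(s)$ by step functions over two \emph{staggered} families of compact spectral windows $\chi_{[\frac{2k-2+i}{2m},\frac{2k-1+i}{2m}]}(s)$, $i=1,2$, proves via Lemmas~\ref{lem:fwc} and~\ref{lem:fvlambda} that $K(\gamma)=\bigcap_m\bigl(H^1_m(\gamma)\cap H^2_m(\gamma)\bigr)$, where $H^i_m(\gamma)$ is the $\frac{1}{m}$-neighborhood of the intersection $G^i_m(\gamma)$ of the faces $F\bigl(\cdot,h_\gamma(\frac{k}{m})\bigr)$, and only then applies Lemma~\ref{lem:tfl} windowwise, using $p^1_m\vee p^2_m\to p$, to get $T^{-1}\bigl(\alpha T(K(\gamma_1))+(1-\alpha)T(K(\gamma_2))\bigr)\subset K(\gamma_3)$. (Your fallback in the case $p\in A$, quoting Lemma~\ref{lem:intersection} for a genuine face, is fine but covers only the easy case.) Since neither the construction of $K_n$ nor the convexity of $T(K_n)$ is actually carried out, the proposal has a genuine gap at precisely the points the proposition exists to settle.
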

\begin{proof}
By Lemma~\ref{lem:maxface}, we may assume that $E=E_\vp$ for some 
pure state $\vp$ on $A$. 
We denote by $\tilde{A}$ the unitization of $A$ if $A$ is not unital, 
else $\tilde{A}=A$. 
We consider $\vp$ a pure state on $\tilde{A}$ and consider 
$L:=\{ a\in \tilde{A} : \vp(a^*a)=0\}$.

One can skip this paragraph if $A$ is unital. 
In case $A$ is not unital, let $\sigma$ denote 
the character on $\tilde{A}$ corresponding to the unitization. 
We claim that there is an approximate unit $(e_n)_n$ for $L\cap L^*$
such that $\sigma(e_n)=1$. 
Let $(e_n)_n$ be any approximate unit. 
Then, by Lemma~\ref{lem:pure}, 
we may assume $\lambda:=\inf\sigma(e_n)>0$. 
We consider the continuous function $h(t)=\min(\lambda^{-1}t,1)$. 
Then, $h(e_n)$ is an approximate unit such 
that $\sigma(h(e_n))=h(\sigma(e_n))=1$ for all $n$. 

Let $(e_n)_n$ be an approximate unit for $L\cap L^*$ such that 
$1-e_n\in A$ for all $n$ (which is equivalent to $\sigma(e_n)=1$).
By perturbation using functional calculus, we may assume that 
there is $f_n\in L\cap L^*$ such that $0\le e_n\le f_n\le1$ and 
$e_nf_n=e_n=f_ne_n$. 
(Although $(e_n)_n$ may not be increasing anymore, 
this does not matter for the following.)
Since $1-x \in L \cap L^*$ for every $x\in E_\vp$, 
one has $e_n(1-x)\approx 1-x \approx (1-x)e_n$.
This implies $(1-e_n)x \approx 1-e_n \approx x(1-e_n)$ 
and $e_n x\approx xe_n$. 
Therefore 
\[
x=((1-e_n)+e_n)x\approx (1-e_n) + e_n^{1/2} x e_n^{1/2} =: \Theta_n(x) \in E_\vp.
\]
See Proof of Lemma~\ref{lem:fwc} for the proof that $\Theta_n$ is contractive 
and $\|\Theta_n(x)\|\le1$ for every $x\in E_\vp$.

To ease notation, we fix $n$ and write $f:=f_n$. 
We consider $s:=1-f \in E_\vp$, its support projection $p:=\chi_{(0,1]}(s)\in A^{**}$, 
and the closed face
\[
F:=\{ x\in S_A : xs = s = sx\} = \{ x\in S_A : xp=p=px\} \subset E_\vp
\]
(although $p$ is not a compact projection).
Since $(1-e_n)s=s=s(1-e_n)$, one has $\Theta_n(E_\vp)\subset F$.
If $p\in A$, then the face $K_n:=F=p+B_{p^\perp A p^\perp}$ satisfies 
the desired property with $C_n=p^\perp A p^\perp$. 
Let's assume $p\notin A$ and 
consider the $\mathrm{C}^*$-sub\-alge\-bra 
\[
C:=\{ a\in A : ap=\gamma p = pa \mbox{ for some scalar }\gamma\}.
\]
For the following, it is probably easier to digest if one views 
elements in $A$ as operator valued $2\times2$ matrices 
in accordance with $p\oplus p^{\perp}$. 
Thus 
$s=[\begin{smallmatrix} s & 0 \\ 0 & 0\end{smallmatrix}]$ (slightly abusing the notation), 
$p=[\begin{smallmatrix} 1 & 0 \\ 0 & 0\end{smallmatrix}]$,  
$F=\{ [\begin{smallmatrix} 1 & 0 \\ 0 & \ast\end{smallmatrix}]\}$,
and $C=\{ [\begin{smallmatrix} \gamma & 0 \\ 0 & \ast\end{smallmatrix}]\}$.
Since $p\notin A$, one has $\|px\|\le\|p^\perp x\|$ for all $x\in C$. 
Hence, there is a norm-one (multiplicative) linear functional $\psi$ 
on $p^\perp C$ such that $\psi(p^\perp x)p=px$ for all $x\in C$, 
or equivalently 
$x=[\begin{smallmatrix} \psi(y) & 0 \\ 0 & y\end{smallmatrix}]$ 
for all $x\in C$ and $y:=p^\perp x$. 
Hence, $\| x \| = \|(1-s)x\|$ for all $x\in C$ and 
one has 
\[
F\subset K_n := s+(1-s)B_{C^\psi_{\IR}}
 = S_A \cap \{ [\begin{smallmatrix} s+\gamma (1-s) & 0 \\ 0 & \ast \end{smallmatrix}] : 
 \gamma\in[-1,1]\} \subset E_\vp.
\] 
By Lemma~\ref{lem:adhoc}, the closed unit ball $B_{C^\psi_{\IR}}$ satisfies the strong Mankiewicz property 
provided that $A$ (and hence $C$) is a unital complex $\mathrm{C}^*$-alge\-bra. 
It is left to show that $T(K_n)$ is convex. 
For $\gamma\in[-1,1]$, put $h_\gamma(\lambda):=\lambda+\gamma(1-\lambda)$. 
For $i=1,2$, put 
\[
G^i_m(\gamma):=E_\vp\cap{\bigcap}_k\,
   F\bigl(\chi_{[\frac{2k-2+i}{2m},\frac{2k-1+i}{2m}]}(s),h_\gamma(\frac{k}{m})\bigr)
\mbox{ and }H^i_m(\gamma) := E_\vp\cap\cN_{\frac{1}{m}}(G^i_m(\gamma)),
\]
where the intersection is over $k=1,2,\ldots,m$ for 
which $\chi_{[\frac{2k-2+i}{2m},\frac{2k-1+i}{2m}]}(s)\neq0$ and 
$\cN_\delta$ means the $\delta$-neighborhood in $A$. 
Note that $\chi_{[\frac{2k-2+i}{2m},\frac{2k-1+i}{2m}]}(s)\le p$ for all $i$, $m$, and $k$.
By Lemmas~\ref{lem:fvlambda} and \ref{lem:tflconv}, one has 
\[
\alpha G^i_m(\gamma_1)+(1-\alpha) G^i_m(\gamma_2) \subset G^i_m(\gamma_3)
\mbox{ and }
\alpha H^i_m(\gamma_1)+(1-\alpha) H^i_m(\gamma_2) \subset H^i_m(\gamma_3)
\]
for every $\gamma_1,\gamma_2\in[-1,1]$, $\alpha\in[0,1]$, and 
$\gamma_3:=\alpha\gamma_1+(1-\alpha)\gamma_2$. 
We claim that 
\begin{align*}
K(\gamma):=\{ x\in E_\vp : px=h_\gamma(s)=xp\}
 = {\bigcap}_{m\in\IN}\, (H^1_m(\gamma)\cap H^2_m(\gamma)).
\end{align*}
To prove the inclusion $\subset$, we define $g_m$ to be 
the continuous function such that 
$g_m(0)=\gamma$, $g_m(\lambda)=h_\gamma(\frac{k}{m})$ for 
$\lambda\in[\frac{2k-1}{2m},\frac{2k}{2m}]$, and linear on $[\frac{2k-2}{2m},\frac{2k-1}{2m}]$ for $k=1,\ldots,m$.
Then, $\|g_m-h_\gamma\|_\infty\le \frac{1}{m}$ and $(g_m-h_\gamma)(0)=0$.
It follows that $(g_m-h_\gamma)(s)\in A\cap pAp$ and for any $x\in K(\gamma)$, 
one has $x+(g_m-h_\gamma)(s)\in G^1_m(\gamma)$.
This proves $x\in H^1_m(\gamma)$. 
The proof of $x\in H^2_m(\gamma)$ is similar.
For the converse inclusion, take $x$ from the RHS of the claimed equality. 
Since $x\in H^1_m(\gamma)\cap H^2_m(\gamma)$, there are 
$y^i_m\in G^i_m(\gamma)$ such that $\| x - y^i_m \|\le\frac{1}{m}$. 
For the projection $p^i_m:=\sum_{k=1}^m \chi_{[\frac{2k-2+i}{2m},\frac{2k-1+i}{2m}]}(s)$ 
in $A^{**}$, one has 
$\|h_\gamma(s)p^i_m - y^i_m p^i_m\|\le\frac{1}{m}$.
Hence, $\| (h_\gamma(s) - x)(p^1_m\vee p^2_m)\|\le\frac{2}{m}$.
Since $p^1_m\vee p^2_m\to p$ ultrastrongly, one sees 
$h_\gamma(s)=xp$. The proof of $h_\gamma(s)=px$ is similar. 
Now, since $K_n=\bigcup_{\gamma\in[-1,1]} K(\gamma)$ and 
\[
T^{-1}(\alpha T(K(\gamma_1)) + (1-\alpha)T(K(\gamma_2)))
\subset {\bigcap}_{m\in\IN}\, (H^1_m(\gamma_3)\cap H^2_m(\gamma_3))=K(\gamma_3)
\]
by Lemma~\ref{lem:tfl}, one concludes that $T(K_n)$ is convex. 
\end{proof}
\section{Proof of Theorem~\ref{thm:mau}} 
We first give the proof of Theorem~\ref{thm:mau} for the case where 
$A$ is not a type $\mathrm{I}_k$ factor with $k=1,2$. 
The $\mathrm{I}_1$ factors $\IR,\,\IC,\,\IH$ are real Hilbert spaces and 
the Mazur--Ulam property for them is already known (see \cite{day,cd}). 
The case of $\mathrm{I}_2$ factor is dealt with separately. 

\begin{proof}[Proof of Theorem~\ref{thm:mau} when $A$ is not a a type $\mathrm{I}_k$ factor with $k=1,2$]
We will show that for any surjective isometry $T\colon S_A\to S_Y$ and 
any pure state $\vp$ on $A$, there is $\psi\in B_{Y^*}$ such that $\Re\vp=\psi\circ T$. 
This yields the assertion by Lemmas~\ref{lem:cond} and \ref{lem:maxface}.
Let $p:=\supp(\vp) \in A^{**}$ and consider the maximal face 
\[
E_\vp:=\{ x\in S_A : \vp(x)=1\} = \{ x\in S_A : xp = p = px \}.
\]
By Lemma~\ref{lem:intersection}, 
there is $\psi\in B_{Y^*}$ such that $\psi=1$ on the face $T(E_\vp)$. 
We will show $\Re\vp=\psi\circ T$.

Let $u\in S_A$ be an arbitrary unitary element. 
By the assumption that $A$ is not a type $\mathrm{I}_k$ factor with $k=1, 2$, 
there is a minimal projection 
$q\in A^{**}$ such that $q\perp p\vee u^*pu$.
We consider the face 
\[
F:=\{ x\in S_A :  xq = uq \} = S_A\cap (uq + B_{(uqu^*)^\perp Aq^\perp}).
\]
Since $T|_F$ is affine by Proposition~\ref{prop:tech} or Lemma~\ref{lem:realvna}, 
there are $\gamma\in\IR$ 
and $\theta\in B_{A^*}$ such that 
$(\psi\circ T)(x)=\gamma+\Re\theta(x)$ for $x\in F$.
By Kadison's transitivity theorem (Lemma~\ref{lem:kadison}), 
there is $x_0\in B_A$ such that $x_0q=uq$ 
and $x_0p=0=px_0$. 
Then, $x_0\in F\cap L\cap L^*$, where $L:=\{ a \in A : \vp(a^*a)=0 \}$. 
Let $(e_n)_n$ be an approximate unit for the 
$\mathrm{C}^*$-sub\-alge\-bra $L\cap L^*$ 
such that $e_n\geq |x_0|$ for all $n$. 
Since $e_n\geq |x_0|$, one has $e_n q = q$ and 
$x_0 \pm (1-e_n) \in F\cap (\pm E_\vp)$ for all $n$.
Thus 
\[
\pm 1 = (\psi\circ T)(x_0\pm(1-e_n)) = (\psi\circ T)(x_0) \pm \theta(1-e_n).
\]
This implies that $(\psi\circ T)(x_0)=0$ and $\theta(1-e_n)=1$ for all $n$.
By Lemma~\ref{lem:pure}, 
one sees $\theta=\vp$ and so $(\psi\circ T)(u)=\Re\vp(u)$. 
Now $\Re\vp=\psi\circ T$ follows from Lemma~\ref{lem:rd} 
or \ref{lem:realvna}.
\end{proof}

For the rest of the paper, we put $A=\IM_2(\IF)$, where $\IF\in\{\IR,\IC,\IH\}$. 
\begin{lem}\label{lem:asab}
Let $\tr$ denote the tracial state on $A$ and put $\cH:=A_{\mathrm{sa}}\cap\ker\tr$.
Then, $\cH$ is a real Hilbert space and 
the self-adjoint part $A_{\mathrm{sa}}$ of $A$ is isometrically isomorphic to 
the $\ell_1$-direct sum $\IR1\oplus_1\cH$, 
via $A_{\mathrm{sa}} \ni a \mapsto \tr(a)1 \oplus (a-\tr(a)1)$. 
Moreover, if $x\in S_A$ satisfies $\|1\pm x\|=2$, then $x\in\cH$.
\end{lem}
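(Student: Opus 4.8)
The plan is to reduce everything to the single structural identity that every self‑adjoint traceless element $h\in\cH$ squares to a scalar, namely $h^2=\|h\|^2 1$. First I would write a self‑adjoint $h\in A=\IM_2(\IF)$ in accordance with the $2\times2$ block structure as $h=\left[\begin{smallmatrix} a & q \\ q^* & d \end{smallmatrix}\right]$ with $a,d\in\IR$ (the diagonal of a self‑adjoint matrix is real) and $q\in\IF$. A direct computation gives the off‑diagonal entries of $h^2$ as $(a+d)q$ and $(a+d)q^*$ and the diagonal entries as $a^2+|q|^2$ and $d^2+|q|^2$, uniformly in $\IF\in\{\IR,\IC,\IH\}$ (using $qq^*=q^*q=|q|^2$). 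Since the tracial state is $\tr(h)=\tfrac12(a+d)$, the hypothesis $h\in\ker\tr$ means $d=-a$, so the off‑diagonal entries vanish and $h^2=(a^2+|q|^2)1$. By the $\mathrm C^*$‑identity $\|h\|^2=\|h^2\|=a^2+|q|^2$, whence $h^2=\|h\|^2 1$.

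Granting this, $\cH$ is a real Hilbert space: for $h,k\in\cH$ one polarizes $(h+k)^2=h^2+hk+kh+k^2$, each square being a scalar, to see that $\tfrac12(hk+kh)\in\IR 1$; thus $\langle h,k\rangle 1:=\tfrac12(hk+kh)$ defines a symmetric $\IR$‑bilinear form with $\langle h,h\rangle=\|h\|^2$, hence an inner product whose norm is the operator norm, and $\cH$ is complete by finite‑dimensionality. For the $\ell_1$‑decomposition, any $a\in A_{\mathrm{sa}}$ is $a=\tr(a)1+h$ with $\tr(a)\in\IR$ and $h:=a-\tr(a)1\in\cH$ (it is self‑adjoint and $\tr(h)=0$), the sum being direct since $\IR1\cap\cH=0$. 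To compute the norm, $h^2=\|h\|^2 1$ forces $\mathrm{spec}(h)\subseteq\{\pm\|h\|\}$, so $\mathrm{spec}(a)\subseteq\{\tr(a)\pm\|h\|\}$ and therefore $\|a\|=\max(|\tr(a)+\|h\||,|\tr(a)-\|h\||)=|\tr(a)|+\|h\|$; this is exactly the $\ell_1$‑norm, so $a\mapsto\tr(a)1\oplus(a-\tr(a)1)$ is a surjective linear isometry onto $\IR1\oplus_1\cH$.

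For the final assertion, realize $A$ as a $*$‑subalgebra of operators on a finite‑dimensional complex Hilbert space. Since $\|1+x\|=2$ is attained at a unit vector $\xi$, the chain $2=\|\xi+x\xi\|\le\|\xi\|+\|x\xi\|\le 2$ is forced to be an equality, so $x\xi=\xi$; the standard fact that a contraction fixes a vector iff its adjoint does (here $\|x^*\xi-\xi\|^2=\|x^*\xi\|^2-1\le0$) gives $x^*\xi=\xi$ as well. Likewise $\|1-x\|=2$ yields a unit vector $\eta$ with $x\eta=-\eta=x^*\eta$. Hence the real part $y:=\tfrac12(x+x^*)$ is self‑adjoint with $y\xi=\xi$, $y\eta=-\eta$ and $\|y\|\le\|x\|=1$, so $\|y\|=1$. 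Applying the first part, $y=\tr(y)1+h$ with $\{1,-1\}\subseteq\mathrm{spec}(y)=\{\tr(y)\pm\|h\|\}$, which forces $\tr(y)=0$ and $\|h\|=1$; thus $y=h\in\cH$ and $y^2=1$.

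It remains to kill the skew‑adjoint part $s:=\tfrac12(x-x^*)$, and this is the crux. Writing $x=y+s$ with $y^2=1$ and using that $x$ is a contraction, one expands
\[
x^*x=1+[y,s]-s^2\le1,\qquad xx^*=1-[y,s]-s^2\le1,
\]
where $[y,s]=ys-sy$ is self‑adjoint. These give $[y,s]\le s^2$ and $-s^2\le[y,s]$, hence $-s^2\le s^2$, i.e.\ $s^2\ge0$; since $s$ is skew‑adjoint, $s^2=-s^*s\le0$, so $s^2=0$ and $s=0$. Thus $x=y\in\cH$. The main obstacle is precisely this last step: $x$ is not assumed self‑adjoint, and the two eigenvector conditions only pin $x$ down on a proper subspace (especially over $\IH$, where $\xi,\eta$ span far less than the ambient space), so self‑adjointness must be extracted globally — which the paired contraction inequalities accomplish uniformly, with no case analysis.
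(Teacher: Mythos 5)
Your proposal is correct, and while the first two assertions follow essentially the paper's track, your proof of the last assertion takes a genuinely different route. For the Hilbert space structure and the $\ell_1$-decomposition, the paper writes a traceless self-adjoint $b$ in spectral form $b=\mu p-\mu p^\perp$ with $p$ a minimal projection, which carries the same information as your identity $h^2=\|h\|^2 1$, and it identifies the operator norm with $\tr(b^*b)^{1/2}$ --- your polarized form $\frac{1}{2}(hk+kh)=\ip{h,k}1$ is the same trace inner product in disguise; the norm formula $\|a\|=|\tr(a)|+\|h\|$ is computed identically. (One micro-gloss on your version: to upgrade $\mathrm{spec}(a)\subseteq\{\tr(a)\pm\|h\|\}$ to the stated equality you should note that for $h\neq0$ both points $\pm\|h\|$ actually occur in the spectrum, since a self-adjoint element with one-point spectrum is a scalar and a nonzero scalar is not traceless; this is immediate.) For the final assertion, the paper also extracts unit vectors $\xi_\pm$ with $x\xi_\pm=\pm\xi_\pm$, but then passes to the minimal projections $p_\pm\in A$ with $p_\pm\xi_\pm=\xi_\pm$: contractivity gives $x^*\xi_\pm=\pm\xi_\pm$, hence $\xi_+\perp\xi_-$, so $p_++p_-=1$ in $\IM_2(\IF)$ and $xp_\pm=\pm p_\pm=p_\pm x$, yielding $x=p_+-p_-$ at once --- self-adjointness falls out rather than being proved separately, and since the $p_\pm$ are minimal in $A$ (rank one over $\IF$), the quaternionic ``small span'' issue you raise does not actually arise in the paper's argument. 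Your alternative --- showing $y:=\frac{1}{2}(x+x^*)$ lies in $\cH$ with $y^2=1$ and then killing $s:=\frac{1}{2}(x-x^*)$ via $x^*x\le1$ and $xx^*\le1$, which give $-s^2\le ys-sy\le s^2$ while $s^2=-s^*s\le0$, forcing $s=0$ --- is correct and buys generality: it is a hands-on proof of the dimension-free fact that in any unital $\mathrm{C}^*$-algebra a contraction whose real part is a self-adjoint unitary equals that unitary (an instance of unitaries being extreme points of the unit ball), with the $2\times2$ structure used only to certify $y^2=1$; the paper's finish is shorter but tied to the rank-one geometry of $\IM_2(\IF)$.
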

\begin{proof}
Every $b \in \cH$ is of the form $\mu p - \mu p^{\perp}$ 
for some $\mu\in\IR$ and a minimal projection $p$. 
It follows that $\cH$ is a real Hilbert space 
as $\| b \|=\tr(b^*b)^{1/2}$ and that $a=\lambda1+b$ satisfies 
\[
\|a\|=\max\{|\lambda+\mu|,|\lambda-\mu|\}=|\lambda|+|\mu|=|\tr(a)| + \| a-\tr(a)1\|. 
\]
Now suppose $x\in S_A$ satisfies $\|1\pm x\|=2$. Then, there are unit vectors 
$\xi_+$ and $\xi_-$ such that $x\xi_{\pm}=\pm\xi_{\pm}$. 
It follows that the minimal projections $p_{\pm}$ that satisfy $p_{\pm}\xi_{\pm}=\xi_{\pm}$ 
satisfies $p_{\pm}xp_{\pm}=\pm p_{\pm}$. Since $A=\IM_2(\IF)$, 
one concludes that $x=p_+-p_- \in \cH$. 
\end{proof}

\begin{lem}\label{lem:asaext}
Let $T\colon S_A\to S_Y$ be any surjective isometry. 
Then, $T|_{S_A\cap A_{\mathrm{sa}}}$ admits a linear extension.
\end{lem}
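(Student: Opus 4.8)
The plan is to exploit the isometric decomposition $A_{\mathrm{sa}} = \IR 1 \oplus_1 \cH$ from Lemma~\ref{lem:asab}, under which $S_A \cap A_{\mathrm{sa}}$ is the unit sphere of an $\ell_1$-sum of a line and the real Hilbert space $\cH$. Its extreme points are the two poles $\pm 1$ together with the equator $S_\cH = \{h \in \cH : \|h\| = 1\}$, while every other sphere point lies on a segment $[\epsilon 1, g]$ joining a pole $\epsilon 1$ ($\epsilon = \pm 1$) to an equatorial point $g \in S_\cH$. Concretely, writing $g = p - p^\perp$ for a minimal projection $p$, the segment $[1,g]$ is the self-adjoint part of the maximal face $F(p)$ and $[-1,g]$ that of $-F(p^\perp)$. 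The goal is to produce a real-linear $\tilde T\colon A_{\mathrm{sa}} \to Y$ that agrees with $T$ on this sphere.

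First I would pin down the images of the distinguished points. Each singleton $\{1\}$, $\{-1\}$, $\{\pm g\}$ is a norm-closed face of $S_A$, since its point is a unitary, hence an extreme point of $B_A$; so it is an intersection face, and Lemma~\ref{lem:intersection} gives $T(-1) = -T(1)$ as well as $T(-g) = -T(g)$. Write $e := T(1)$, so that $T(\pm 1) = \pm e$. Next, since $A = \IM_2(\IF)$ is a real von Neumann algebra, Lemma~\ref{lem:realvna} shows that $T$ is affine on every maximal face, in particular on $F(p)$ and on $-F(p^\perp)$, hence on the two meridian segments $[1,g]$ and $[-1,g]$. Combining affineness on these segments with $T(\pm 1) = \pm e$, a direct computation on the parametrization $t 1 + h$ (with $|t| + \|h\| = 1$) yields the single formula $T(t 1 + h) = t e + \|h\|\,T(h/\|h\|)$, valid on all of $S_A \cap A_{\mathrm{sa}}$. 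In other words, $T$ coincides there with the radial extension of its restriction to the equator.

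It then remains to show that this radial extension of $T|_{S_\cH}$ is linear. Here $T|_{S_\cH}$ is an isometry from the unit sphere of the real Hilbert space $\cH$ (of dimension $2$, $3$, or $5$ according to $\IF = \IR, \IC, \IH$) into the sphere $S_Y$, so I would invoke the Mazur--Ulam property of Hilbert spaces, in the strengthened form asserting that such a sphere-isometry into an arbitrary normed space extends to a real-linear isometry $L\colon \cH \to Y$; this is the same input already used for the $\mathrm{I}_1$ factors $\IR, \IC, \IH$. Assembling, the map $\tilde T(a) := \tr(a)\, e + L(a - \tr(a)1)$, defined via $A_{\mathrm{sa}} = \IR 1 \oplus \cH$, is real-linear and restricts to $T$ on $S_A \cap A_{\mathrm{sa}}$, as required.

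The step I expect to be the main obstacle is the last one. The target $T(S_\cH)$ is only known to be the antipodally symmetric set $\{w \in S_Y : \|e \pm w\| = 2\}$, which need not be the unit sphere of a subspace of $Y$, so the naive surjective-onto-a-sphere form of the Mazur--Ulam property does not literally apply. One must therefore use (or reprove) the \emph{into} version of the extension theorem for Hilbert spheres, deriving oddness from the intersection-face argument above and reducing additivity to a great-circle analysis where sphere-isometries are realized by Euclidean rotations and reflections. This is the delicate point that must be cited or verified with care.
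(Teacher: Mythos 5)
Your first two steps are sound and agree with the paper: the meridian segments $[\pm1,g]$ lie in (negatives of) maximal faces, $T$ is affine there, and together with oddness this gives the radial formula $T(t1+h)=tT(1)+\|h\|\,T(h/\|h\|)$ on $S_A\cap A_{\mathrm{sa}}$. The genuine gap is your final step. The ``into'' extension theorem for Hilbert spheres that you propose to cite or reprove is not available, and in the generality in which you state it (odd isometry from $S_\cH$ into the unit sphere of an arbitrary normed space extends real-linearly) it is \emph{false}. Take $\cH=\IR^2$ and $Y=C(S^1)\oplus_\infty\IR$, and for $x=(\cos\theta,\sin\theta)\in S_\cH$ set
\[
f(x):=\bigl(\,(\ip{x,u})_{u\in S^1},\ \tfrac{2}{3\pi}\sin 3\theta\,\bigr).
\]
Then $f$ maps $S_\cH$ into $S_Y$ (the $C(S^1)$-coordinate already has sup-norm $\sup_u|\ip{x,u}|=1$), it is odd (since $\sin 3(\theta+\pi)=-\sin3\theta$), and it is isometric: $\sup_u|\ip{x-y,u}|=\|x-y\|_2$, while $\tfrac{2}{3\pi}|\sin3\theta_x-\sin3\theta_y|\le\tfrac{2}{\pi}\,d_{\mathrm{arc}}(x,y)\le 2\sin(d_{\mathrm{arc}}(x,y)/2)=\|x-y\|_2$. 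Yet $f$ admits no linear extension, because $\sin3\theta$ is a third harmonic and not the restriction of a linear functional to the circle, and its radial extension is positively homogeneous but not additive. Relatedly, your suggested ``great-circle analysis where sphere-isometries are realized by Euclidean rotations and reflections'' has no meaning in the target: only the source is Euclidean, and the example shows that the intrinsic metric of the image does not control the linear structure of $Y$. Whether the extra information that the image is the equator $\{w\in S_Y:\|T(1)\pm w\|=2\}$ rescues the statement is precisely what would have to be proved, and you give no argument for it.

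The paper closes this step without ever isolating $T|_{S_\cH}$ as a stand-alone into-isometry; it exploits the surjectivity of the full map $T\colon S_A\to S_Y$. For non-parallel $b,c\in S_\cH$, linearity of $\tilde T$ on $\lh\{1,b\}$ and on $\lh\{1,c\}$ gives, for all $\lambda\in[-1,1]$,
\[
\|2(1-|\lambda|)1+\lambda(b+c)\|=\|2(1-|\lambda|)T(1)+\lambda(T(b)+T(c))\|,
\]
whence $\mu:=\|b+c\|=\|T(b)+T(c)\|$; since $T$ is onto $S_Y$, one may form $x:=T^{-1}\bigl(\frac{1}{\mu}(T(b)+T(c))\bigr)$, and the same identity with $\lambda=\frac{2}{2+\mu}$ yields $\|1\pm x\|=2$, so that $x\in S_\cH$ by the last clause of Lemma~\ref{lem:asab} --- this is where the equator criterion actually enters, applied in the \emph{source}. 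Further isometry computations give $\|\mu x-b\|=\|\mu x-c\|=1$ and $\|\mu x\|=\mu$, and trilateration in the Hilbert space $\cH$, where it is valid, forces $\mu x=b+c$; bisection and continuity then make $\tilde T$ affine on segments of $\cH$, hence linear. To repair your proof you must replace the cited ``into'' theorem by an argument of this type, pulling the candidate midpoint back through $T^{-1}$ and doing the Euclidean geometry on the Hilbert-space side.
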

\begin{proof}
Let $\tilde{T}\colon A\to Y$ denote the homogeneous extension of $T$, 
which is given by $\tilde{T}(a)=\|a\|T(\frac{a}{\|a\|})$ for $a\neq0$ and $\tilde{T}(0)=0$. 
For any $b\in S_{\cH}$, the convex hull of $1$ and $b$ is contained in 
$S_A$ by Lemma~\ref{lem:asab} and hence $T$ is affine there. 
Moreover, since $T$ preserves antipodal points (\cite{tingley}), $\tilde{T}$ is linear 
on the linear span of $1$ and $b$.
It remains to show that $\tilde{T}$ is linear on $\cH$. 
For this, we first prove that $\|T(b)+T(c)\| = \| b + c \|$ 
and $T(\frac{1}{\|b+c\|}(b+c))=\frac{1}{\|b+c\|}(T(b)+T(c))$ for every $b,c\in S_\cH$. 

Let $b,c\in S_\cH$ be given. 
We may assume that they are not parallel. 
For any $\lambda\in[-1,1]$, 
one has 
\begin{align*}
\| 2(1-|\lambda|) 1 +\lambda (b + c) \|
 &= \|((1-|\lambda|)1 + \lambda b) - (-(1-|\lambda|)1 - \lambda c) \| \\
 &= \|T((1-|\lambda|)1 + \lambda b) - T(-(1-|\lambda|)1 - \lambda c) \| \\
 &= \| 2(1-|\lambda|) T(1) +\lambda (T(b) + T(c)) \|.
\end{align*}
In particular, 
$\mu:=\|b + c \| = \| T(b) + T(c) \|>0$ and put $\lambda:=\frac{2}{2+\mu}$. Then, 
\begin{align*}
\| 1 \pm T^{-1}(\frac{1}{\mu}(T(b) + T(c))) \|
 &=\frac{1}{\lambda\mu}\|  2(1-\lambda)T(1) \pm \lambda( T(b) + T(c))\| \\
 &=\| 1 \pm \frac{1}{\mu}(b + c) \| = 2.
\end{align*}
By Lemma~\ref{lem:asab}, this implies that $x:=T^{-1}(\frac{1}{\mu}(T(b) + T(c))) \in S_{\cH}$. 
If $\mu\le 1$, then  
\begin{align*}
1-\mu+\|\mu x - b \| &= \|( (1-\mu)1 + \mu x ) - b \| \\
 &= \| (1-\mu)T(1)+\mu T(x) - T(b) \| 
=1-\mu + \|T(c)\|
\end{align*}
and hence $\|\mu x- b \|=1$. 
On the other hand, if $\mu\geq 1$, then  
\begin{align*}
1-\frac{1}{\mu}+\|x - \frac{1}{\mu}b \| = \| x - ((1-\frac{1}{\mu})1 + \frac{1}{\mu}b) \| 
 = 1-\frac{1}{\mu}+ \frac{1}{\mu}\|T(c)\|
\end{align*}
and hence $\|\mu x - b \|=1$ again. 
Thus in any case, one has $\|\mu x - b \|=1$, and similarly $\|\mu x - c \|=1$. 
Also $\|\mu x - 0\|=\mu$. 
Therefore by trilateration, one has $\mu x=b+c$.

It follows that 
\[
\tilde{T}(\frac{\| c \| }{ \|b\|+\|c\|}b+ \frac{\|b \|}{ \|b\|+\|c\|}c)
=\frac{\| c \|}{ \|b\|+\|c\|}\tilde{T}(b) + \frac{\|b \|}{ \|b\|+\|c\|}\tilde{T}(c)
\]
for every $b,c\in\cH$. 
Thus the bisection method and continuity of $\tilde{T}$ imply that 
$\tilde{T}$ is affine on the segment $[b_0,c_0]$ for any $b_0,c_0\in \cH$.
Note that the bisection process works 
whenever $b_0$ and $c_0$ are not parallel, because in which case 
the norm on $[b_0,c_0]$ is bounded above and away from zero. 
This proves that $\tilde{T}$ is linear on $\cH$.
\end{proof}

We consider the unitary group $\cU$ of $A$ and the diagonal subgroup 
\[
\cD=\{[\begin{smallmatrix} \alpha & \\ & \beta \end{smallmatrix}] : \alpha,\beta \in\IF,\ |\alpha|=1=|\beta|\}\subset\cU.
\]
We claim that 
\[
\cU = \cD\cdot\{[\begin{smallmatrix} \lambda & \sqrt{1-\lambda^2} \\ \sqrt{1-\lambda^2} & -\lambda \end{smallmatrix}] : \lambda\in[0,1]\}\cdot\cD.
\]
Indeed, for any $u\in\cU$, it is obvious that 
$\cD\cdot u \ni [\begin{smallmatrix} \lambda & \gamma \\ \delta & -\mu \end{smallmatrix}]$ 
for some $\lambda,\mu\in[0,1]$. 
But since the latter is unitary, one must have $\lambda=\mu$ and $|\gamma|=|\delta|$.
If $\lambda=\mu>0$, then one moreover has $\gamma=\delta^*$ and so by conjugating 
$[\begin{smallmatrix} 1 &  \\  & \delta \end{smallmatrix}]$, one obtains the claim. 
One obtains the claim in the case $\lambda=\mu=0$ also. 

\begin{lem}\label{lem:2by2}
Let $E=\{ [\begin{smallmatrix} 1 &  \\  & \ast \end{smallmatrix}]\}$ be the maximal face 
of $S_A$ and $\vp_0$ be the corresponding pure state such that $\vp_0=1$ on $E$. 
Let $T\colon S_A\to S_Y$ be a surjective isometry and 
$\psi\in S_{Y^*}$ be such that $\psi\circ T=1$ on $E$. 
Then, one has 
$\psi\circ T=\vp_0$ on $S_A\cap A_{\mathrm{sa}}$ as well as on $\cD$.
\end{lem}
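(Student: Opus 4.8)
The plan is to establish the two equalities separately; each will reduce to the elementary observation that a norm-one functional on a (real) inner product space which attains the value $1$ at a unit vector must be the inner product against that vector. Throughout write $p:=\supp(\vp_0)=[\begin{smallmatrix}1&0\\0&0\end{smallmatrix}]$, so that $E=E_{\vp_0}=\{[\begin{smallmatrix}1&0\\0&\ast\end{smallmatrix}]\}$, and recall that $\psi\circ T=1$ on $E$ and hence $\psi\circ T=-1$ on $-E=T^{-1}(-T(E))$ by Lemma~\ref{lem:intersection}.

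For the self-adjoint part I would first use Lemma~\ref{lem:asaext} to extend $T|_{S_A\cap A_{\mathrm{sa}}}$ to a real-linear isometry $\tilde T$ on $A_{\mathrm{sa}}$ and set $\rho:=\psi\circ\tilde T$, a functional agreeing with $\psi\circ T$ on $S_A\cap A_{\mathrm{sa}}$ and satisfying $\|\rho\|\le\|\psi\|\,\|\tilde T\|=1$. Both $1$ and $2p-1=[\begin{smallmatrix}1&0\\0&-1\end{smallmatrix}]$ lie in $E$, so $\rho(1)=1$ and $\rho(2p-1)=1$. By Lemma~\ref{lem:asab} one has $A_{\mathrm{sa}}=\IR1\oplus_1\cH$ with $\cH$ a real Hilbert space, and $2p-1$ is a unit vector of $\cH$. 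Thus $\rho|_\cH$ is a norm-one functional attaining $1$ at $2p-1$, which forces $\rho|_\cH=\ip{\,\cdot\,,2p-1}$; since $\tr((2p-1)b)=b_{11}=\vp_0(b)$ for $b\in\cH$, this is exactly $\vp_0|_\cH$. Together with $\rho(1)=1=\vp_0(1)$ and the decomposition $A_{\mathrm{sa}}=\IR1\oplus\cH$, the two linear functionals agree, giving $\psi\circ T=\vp_0$ on $S_A\cap A_{\mathrm{sa}}$.

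For $\cD$, fix $d=[\begin{smallmatrix}\alpha&0\\0&\beta\end{smallmatrix}]$ with $|\alpha|=|\beta|=1$ and consider the partial isometry $w:=\beta p^\perp$, whose closed face is $F(w)=w+B_{pAp}=\{[\begin{smallmatrix}\gamma&0\\0&\beta\end{smallmatrix}]:\gamma\in\IF,\ |\gamma|\le1\}$, affinely isometric to the ball $B_\IF$ of $\IF\in\{\IR,\IC,\IH\}$. As $B_\IF$ is strictly convex it has the strong Mankiewicz property (cf.\ Lemma 6.1 in \cite{bfgm}), and $F(w)$ is a norm-closed, hence intersection, face, so $T(F(w))$ is convex by Lemma~\ref{lem:intersection} and $T|_{F(w)}$ is affine. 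Hence $g(\gamma):=\psi(T([\begin{smallmatrix}\gamma&0\\0&\beta\end{smallmatrix}]))=c_0+L(\gamma)$ for some real-linear $L\colon\IF\to\IR$. The endpoints $\gamma=\pm1$ lie in $\pm E$, so $g(\pm1)=\pm1$, which forces $c_0=0$ and $L(1)=1$; and $g$ is $1$-Lipschitz in $\gamma$ (as $T$ is isometric and $\|\psi\|\le1$), so $\|L\|\le1$. Writing $L(\gamma)=\Re(\bar s\gamma)$ with $s\in\IF$, the constraints $\Re s=L(1)=1$ and $|s|=\|L\|\le1$ give $s=1$, whence $g(\gamma)=\Re\gamma$. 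Evaluating at $\gamma=\alpha$ yields $\psi(T(d))=\Re\alpha=\Re\vp_0(d)$, i.e.\ the asserted equality on $\cD$ (read, as it must be for the real-valued $\psi\circ T$, as $\Re\vp_0$).

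The routine inputs—linearity on $A_{\mathrm{sa}}$, the identification $F(w)\cong B_\IF$, and the strict-convexity form of the strong Mankiewicz property—are all furnished by earlier results, and the same inner-product principle drives both halves. The one genuinely delicate point is the non-self-adjoint part of $\cD$, where self-adjointness is unavailable: the resolution is to place $d$ inside the rank-drop face $F(w)$ whose two real boundary points $\gamma=\pm1$ fall into $\pm E$, and then to let the Lipschitz estimate, rather than any further boundary value, pin down $L$ on the imaginary directions. The items I would verify most carefully are that $F(w)$ is indeed a norm-closed face of $S_A$ so Lemma~\ref{lem:intersection} applies, and that $\tilde T$ is norm-preserving on $A_{\mathrm{sa}}$ so that $\|\rho\|\le1$.
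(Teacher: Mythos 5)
Your proposal is correct and takes essentially the same route as the paper: on $A_{\mathrm{sa}}$ you combine the linear extension of Lemma~\ref{lem:asaext} with the $\IR\oplus_1\cH$ decomposition of Lemma~\ref{lem:asab} to identify the dual element (the paper's ``easily seen'' step is exactly your inner-product argument at the unit vector $2p-1$), and on $\cD$ you restrict to the face $\{[\begin{smallmatrix} \ast & \\ & \beta \end{smallmatrix}]\}\cong B_\IF$, where affineness of $T$, the boundary values $\pm1$ at $\gamma=\pm1$, and contractivity pin down the functional as $\gamma\mapsto\Re\gamma$, precisely as in the paper. Your explicit justifications --- strict convexity of $B_\IF$ plus convexity of the image via Lemma~\ref{lem:intersection} for the affineness, and the reading of $\vp_0$ as $\Re\vp_0$ on non-self-adjoint elements --- merely spell out what the paper leaves implicit.
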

\begin{proof}
By Lemma~\ref{lem:asaext}, the map $\vp:=\psi\circ T$ admits a linear 
extension on $A_{\mathrm{sa}}$. 
Since $A_{\mathrm{sa}}\cong \IR\oplus_1\cH$ by Lemma~\ref{lem:asab}, 
the linear functional $\vp$ corresponds to a norm one element in $\IR\oplus_\infty\cH$, 
which is easily seen to be $1\oplus  [\begin{smallmatrix} 1 &  \\  & -1 \end{smallmatrix}]$. 
It follows that $\vp(a)=\tr(a)+\tr([\begin{smallmatrix} 1 &  \\  & -1 \end{smallmatrix}]a)=\vp_0(a)$ 
for $a\in S_A\cap A_{\mathrm{sa}}$. 
Next, let $\beta\in\IF$ be such that $|\beta|=1$. Then, $\vp$ 
is affine on $\{ [\begin{smallmatrix} \ast &  \\  & \beta \end{smallmatrix}]\}$. 
Since $\vp$ is a contractive map such that 
$\vp( [\begin{smallmatrix} \pm 1 &  \\  & \beta \end{smallmatrix}])=\pm1$, 
one obtains $\vp( [\begin{smallmatrix} \alpha &  \\  & \beta \end{smallmatrix}])=
\Re\alpha$.
\end{proof}

\begin{proof}[Proof of Theorem~\ref{thm:mau} for $A=\IM_2(\IF)$]
Let $T\colon S_A\to S_Y$ be given and $\psi\in S_{Y^*}$ be given such that $\vp:=\psi\circ T$ satisfies $\vp=\vp_0$ on $E$. 
It suffices to show $\vp=\vp_0$ everywhere. 
Let $x\in S_A$ be given and consider the polar decomposition $x=v|x|$. 
We may assume that $v$ is a unitary element. Since the surjective isometry 
$T(v\,\cdot\,)$ admits a linear extension on $A_{\mathrm{sa}}$ 
by Lemma~\ref{lem:asaext} and $\ext A_{\mathrm{sa}}\subset\cU$, 
to prove $\vp(x)=\vp_0(x)$, it suffices to show $\vp=\vp_0$ on $\cU$. 
Let $u\in\cU$ be given and write it as $u=[\begin{smallmatrix} \alpha_1 & \\ & \alpha_2 \end{smallmatrix}][\begin{smallmatrix} \lambda & \sqrt{1-\lambda^2} \\ \sqrt{1-\lambda^2} & -\lambda \end{smallmatrix}][\begin{smallmatrix} \beta_1 & \\ & \beta_2 \end{smallmatrix}]=:axb$. 
Since the surjective isometry $S_A\ni z\mapsto T(azb) \in S_Y$ admits a linear 
extension on $A_{\mathrm{sa}}$, 
one has 
\[
\vp(u)=\vp(axb)=\lambda\vp(a[\begin{smallmatrix} 1 &  \\  & -1\end{smallmatrix}]b)+
 \sqrt{1-\lambda^2}\vp(a[\begin{smallmatrix}  & 1 \\ 1 & \end{smallmatrix}]b).
\]
Since $a[\begin{smallmatrix} 1 &  \\  & -1\end{smallmatrix}]b\in\cD$, 
one has $\vp(a[\begin{smallmatrix} 1 &  \\  & -1\end{smallmatrix}]b)=\vp_0(a[\begin{smallmatrix} 1 &  \\  & -1\end{smallmatrix}]b)$ by Lemma~\ref{lem:2by2}.
On the other hand, since
$a[\begin{smallmatrix}  & 1 \\ 1 & \end{smallmatrix}]b=
[\begin{smallmatrix} 1 & \\ & \alpha_2\beta_1 \end{smallmatrix}][\begin{smallmatrix}  & 1 \\ 1 & \end{smallmatrix}][\begin{smallmatrix} 1 & \\ & \alpha_1\beta_2 \end{smallmatrix}]$
and $T'(\,\cdot\,)=T([\begin{smallmatrix} 1 & \\ & \alpha_2\beta_1 \end{smallmatrix}]\,\cdot\,[\begin{smallmatrix} 1 & \\ & \alpha_1\beta_2 \end{smallmatrix}])$ is a surjective isometry such 
that $\vp':=\psi\circ T'$ satisfies $\vp'=\vp_0$ on $E$, Lemma~\ref{lem:2by2} implies that 
$\vp(a[\begin{smallmatrix}  & 1 \\ 1 & \end{smallmatrix}]b)
=\vp'([\begin{smallmatrix}  & 1 \\ 1 & \end{smallmatrix}])=\vp_0([\begin{smallmatrix}  & 1 \\ 1 & \end{smallmatrix}])=0$.
These two imply $\vp(u)=\vp_0(u)$.
\end{proof}

This finishes the proof of Theorem~\ref{thm:mau}. 
We note that a similar proof yields the Mazur--Ulam property 
for the self-adjoint part $A_{\mathrm{sa}}$ of any real von Neumann 
algebra $A$.

\end{document}